\gdef\private#1{}  \gdef\public#1{#1}  %% for publication
\newtheorem{thm}{Theorem}
\newtheorem{lem}[thm]{Lemma}
\newtheorem{prop}[thm]{Proposition}
\newtheorem{cor}[thm]{Corollary}
\newtheorem{question}[thm]{Question}
\newtheorem{conj}[thm]{Conjecture}
\theoremstyle{definition}
\newtheorem{defn}[thm]{Definition}
\newtheorem*{notation}{Notation}
\theoremstyle{remark}
\newcommand*\fancyrefthmlabelprefix{thm}\frefformat{plain}{\fancyrefthmlabelprefix}{Theorem~#1}
\newcommand*\fancyreflemlabelprefix{lem}\frefformat{plain}{\fancyreflemlabelprefix}{Lemma~#1}
\newcommand*\fancyrefproplabelprefix{prop}\frefformat{plain}{\fancyrefproplabelprefix}{Proposition~#1}
\newcommand*\fancyrefcorlabelprefix{cor}\frefformat{plain}{\fancyrefcorlabelprefix}{Corollary~#1}
\newcommand*\fancyrefclaimlabelprefix{claim}\frefformat{plain}{\fancyrefclaimlabelprefix}{Claim~#1}
\newcommand*\fancyreffactlabelprefix{fact}\frefformat{plain}{\fancyreffactlabelprefix}{Fact~#1}
\newcommand*\fancyrefquestionlabelprefix{question}\frefformat{plain}{\fancyrefquestionlabelprefix}{Question~#1}
\newcommand*\fancyrefconjlabelprefix{conj}\frefformat{plain}{\fancyrefconjlabelprefix}{Conjecture~#1}
\newcommand*\fancyrefdefnlabelprefix{defn}\frefformat{plain}{\fancyrefdefnlabelprefix}{Definition~#1}
\newcommand*\fancyrefconstlabelprefix{const}\frefformat{plain}{\fancyrefconstlabelprefix}{Construction~#1}
\newcommand*\fancyrefsetuplabelprefix{setup}\frefformat{plain}{\fancyrefsetuplabelprefix}{Setup~#1}
\newcommand*\fancyrefexlabelprefix{ex}\frefformat{plain}{\fancyrefexlabelprefix}{Example~#1}
\newcommand*\fancyrefremlabelprefix{rem}\frefformat{plain}{\fancyrefremlabelprefix}{Remark~#1}
\newcommand*\fancyrefitemlabelprefix{item}\frefformat{plain}{\fancyrefitemlabelprefix}{(#1)}
\def\repeat#1#2 {\expandafter\gdef\csname B#1\endcsname {\mathbb{#1}}
  \ifthenelse{\equal{#2}{*}}{}{\repeat #2 }}
\def\repeat#1#2 {\expandafter\gdef\csname C#1\endcsname {\mathcal{#1}}
  \ifthenelse{\equal{#2}{*}}{}{\repeat #2 }}
\def\ug {{\underline{g}}}
\newcounter{last-index}
\newcommand{\xxx}[2][] {%
  \private{\ifthenelse{\equal{#1}{}}
    {\underline{$\bullet$}}{\uline{#1}}\marginpar{\tiny #2}}%
  \public{#1}\xspace}
\newcommand{\Fq} {{\mathbb{F}_q}} %{\ensuremath{\mathbb{F}_q}}\xspace}
\newcommand{\Fclosed} {{\ensuremath{\overline{\,\mathbb{F}\,}}}\xspace}
\newcommand{\Fpclosed} {{\ensuremath{\Fclosed\kern-3pt_p}}\xspace}
\newcommand{\Fqclosed} {{\ensuremath{\Fclosed\kern-3pt_q}}\xspace}
\newcommand{\e} {{\ensuremath{\varepsilon}}\xspace}
\newcommand{\Frac}[2]{{\ensuremath{\textstyle\frac{#1}{#2}}}\xspace}
\newcommand{\gen} {{\rm gen}}
\newcommand{\cl}[1] {{\ensuremath{\overline{#1}}}\xspace}
\newcommand{\quotient}[2] {\ensuremath{\kern1.5pt{#1}\kern-2pt/\kern-2.5pt{#2}}}
\DeclareMathOperator{\diam}{diam}
\DeclareMathOperator{\degC}{deg_\BC}
\begin{document}
\title{Growth in linear groups
\private{\\\tiny\rm \today}}
\author{L\'aszl\'o Pyber
  and Endre Szab\'o
}
\public{
\date{\today}
\address{L\'aszl\'o Pyber and Endre Szab\'o\newline
A. R\'enyi Institute of Mathematics\newline
Hungarian Academy of Sciences\newline
P.O. Box 127\newline
H-1364 Budapest}
\email{pyber@renyi.hu}
\email{endre@renyi.hu}
\thanks{L.P. is supported in part by OTKA NK78439 and K84233}
\thanks{E.Sz. is supported in part by OTKA NK81203 and K84233}
}
\private{\fbox{\Huge\bf Private, do not distribute!}\\}

\maketitle

\begin{abstract}
  We give a description of non-growing subsets in linear groups,
  which extends the Product theorem for simple groups of Lie type.
  We also give an account of various related aspects of growth
  in linear groups.
\end{abstract}

\section{A Polynomial Inverse theorem in linear groups}
\label{sec:polyn-inverse-theor}

The following theorem was proved simultaneously and independently by
Breuillard, Green, Tao \cite{BrGrTao:ProductTheorem:2011}
and
Pyber, Szab\'o \cite{PyberSzabo:2010:growth}  in 2010.
\begin{thm} [Product theorem]
  \label{thm:Product-theorem}
  Let $L$ be a finite simple group of Lie
  type of rank $r$ and $A$ a generating set of $L$.  Then either
  $A^3=L$ or
  $$
  |A^3|\gg |A|^{1+\e}
  $$
  where $\e$ and the implied constant depend only on $r$.
\end{thm}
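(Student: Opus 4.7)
The plan is to argue by contradiction, assuming $A^3 \neq L$ and $|A^3| < |A|^{1+\e}$ for $\e$ sufficiently small in terms of $r$, and to derive a contradiction from the Lie-theoretic structure of $L$. I would set $K = |A^3|/|A|$ and invoke Ruzsa--Plünnecke--Petridis-style inequalities to obtain $|A^n| \leq K^{O_n(1)}|A|$ for all bounded iterates, so that each $A^C$ with $C = C(r)$ behaves like an approximate group of doubling $K^{O(1)}$. I would then embed $L$ into $G(\Fq)$, where $G$ is the ambient adjoint simple algebraic group of the same Lie type and rank $r$, and perform all the geometry inside $G(\Fqclosed)$.

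The technical backbone would be a Larsen--Pink style dimension estimate: for any proper closed subvariety $V \subseteq G$ of bounded complexity,
$$|A \cap V| \ll K^{O(1)} |A|^{\dim V/\dim G}.$$
I would prove this in parallel with the growth statement by induction on $\dim G$, using the escape-from-subvarieties lemma of Eskin--Mozes--Oh (whose hypothesis holds since $A$ generates $L$, hence is Zariski dense in $G$) to ensure that bounded powers of $A$ meet the complement of every such $V$ in a controlled way.

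The core of the argument would be a \emph{torus pivoting} step. Via escape I would locate a regular semisimple element $g \in A^{C_1}$, whose centralizer $T = C_G(g)^\circ$ is a maximal torus, and then use a pigeonhole across the $T$-cosets intersecting $A$ to show that $T \cap A^{C_2}$ has size on the order of $|A|^{\dim T/\dim G}$, matching the Larsen--Pink bound. Iterating and appealing to the much easier abelian growth estimate inside $T$ would force $A^{C_3}$ to swallow essentially all of $T(\Fq)$. Finally, because the conjugates of any maximal torus by a generating set of $L$ generate $G$ as an algebraic group, translating $T$ by elements of $A$ would yield a subset of $A^{C_4}$ that is Zariski dense in $G$; a standard covering/escape argument would then give $A^{C_4} = L$, after which a Gowers-type trick upgrades this to $A^3 = L$, contradicting the assumption.

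The main obstacle, and the real heart of the proof, is the torus pivoting step: one must simultaneously ensure that the pivot $g$ is regular semisimple, that its centralizer torus is captured to the correct order by bounded powers of $A$, and that all of the constants remain uniform in the rank $r$. Making the interaction between the Larsen--Pink dimension bound and the Eskin--Mozes--Oh escape lemma quantitatively robust enough to sustain the induction on $\dim G$ is where the genuine difficulty of the theorem lies.
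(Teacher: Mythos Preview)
Your overall framework matches the paper closely: the Ruzsa--Pl\"unnecke control of iterated products, the Larsen--Pink-type dimensional inequality, escape from subvarieties, and the focus on regular semisimple elements and their centralising maximal tori are exactly the pillars the paper identifies. The pivoting you describe --- locating a regular semisimple $g\in A^{C_1}$ and showing via pigeonhole on $T$-cosets that $|T\cap A^{C_2}|$ has order roughly $|A|^{\dim T/\dim G}$ --- is also correct, and is precisely the ``covered'' half of the paper's fundamental dichotomy.

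The gap is the next sentence. ``Appealing to the much easier abelian growth estimate inside $T$ would force $A^{C_3}$ to swallow essentially all of $T(\Fq)$'' cannot work: tori are abelian, and abelian groups admit \emph{no} growth theorem --- a coset progression inside $T(\Fq)$ can have bounded tripling while occupying an arbitrarily small fraction of the torus. If $|A|\approx q^{\alpha\dim G}$ with $\alpha$ small, the Larsen--Pink matching only gives $|T\cap A^{C_2}|\approx q^{\alpha\dim T}$, and no bounded iterate inside $T$ bridges the gap to $|T(\Fq)|\approx q^{\dim T}$; hence the subsequent step of conjugating a full copy of $T(\Fq)$ around $L$ never gets started. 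What the paper does instead is use the \emph{other} half of the dichotomy: if a torus $T'$ contains no regular semisimple element of $A$ (is ``not covered'') then $|T'\cap A|$ is bounded by the strictly smaller power $|A|^{1/(n+1)-1/(n^2-1)}$. Growth is then extracted by conjugating \emph{between} tori, in direct analogy with the Baby Product theorem proved earlier in the paper: one passes from a covered torus $T$ to an $A$-conjugate $T^s$ that fails to be covered by the relevant power of $A$, and the exponent gap between the two halves of the dichotomy, fed through \fref{lem:induce-from-growing-subgroup} and \fref{prop:Helfgott-3-is-enough}, forces $|A^3|\ge|A|^{1+\e}$. The heart of the argument is this inter-torus comparison, not intra-torus growth; your sketch has the pivot element but not the pivot.
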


For $G= PSL(2,p)$, $p$ prime, this is a famous result of
Helfgott \cite{Helfgott:SL-2-p:2008}. 
For $PSL(3,p)$ resp. $PSL(2,q)$, $q$ a prime-power, this was proved
earlier by Helfgott \cite{Hefgott:SL-3-p:2011} resp.
Dinai \cite{Dinai:growth-SL2-finite-field:2011}
and Varj\'u \cite{varju:expansion-SLd-Z-square-free:2012}.

In \cite{PyberSzabo:2010:growth} we give various examples which show
that in the above result the
dependence of $\e$ on $r$ is necessary.
In particular we construct
generating sets of $SL(n,3)$ of size $2^{n-1}+4$ with $|A^3|< 100|A|$
for $n\ge3$.

For the groups $G=PSL(n,q)$
(which are simple groups of Lie type of rank $n-1$)
the Product theorem can be reformulated as follows:
If $A$ is a generating set of $G$, such that
$|A^3| < K|A|$ for some number $K\ge1$,
then $A$ is contained in $K^{c(n)}$ (i.e. polynomially many)
cosets of some normal subgroup $H$ of $G$ contained in $A^3$.
This (somewhat artificial) reformulation turns out to be quite useful
when we seek an extension of the Product theorem which describes
non-growing subsets of linear groups.

The Product theorem has quickly become a central result of finite
asymptotic group theory  with many applications. To describe a few of
these we introduce some graph-theoretic terminology. 
The \emph{diameter},
$\diam(X)$, of an undirected graph $X=(V,E)$ is the
largest distance between two of its vertices.
The \emph{girth} of $X$ is the length of the shortest cycle in $X$.
Given a subset
$A$ of the vertex set $V$ the \emph{expansion} of $A$, $c(A)$,
is defined to be the ratio $|\sigma(X)|/|X|$ where $\sigma(X)$ is the
set of vertices at distance $1$ from $A$.
A graph is a \emph{$C$-expander} for some $C>0$
if for all subsets $A$ of $V$
with $|A|< |V|/2$ we have $c(A)\ge C$.
A family of graphs is an \emph{expander family}
if all of its members are $C$-expanders
for some fixed positive constant $C$.

Let $G$ be a finite group and $S$ a symmetric (i.e. inverse-closed)
set of generators of $G$.  The Cayley graph $Cay(G,S)$ is a graph
whose vertices are the elements of $G$ and which has an edge from $x$
to $y$ if and only if $x=sy$ for some $s \in S$. Then the diameter of
$Cay(G,S)$ is the smallest number $d$
such that $S^d=G$.

The following classical 
conjecture is due to Babai
\cite{Babai-Seress:diameter-perm-group:1992}
\begin{conj}[Babai]
  \label{conj:babai}
  For every non-abelian finite simple group $L$ and every symmetric
  generating set $S$ of $L$ we have $\diam\big(Cay(L,S)\big) \le
  C\big(\log|L|\big)^c$ where $c$ and $C$ are absolute constants.
\end{conj}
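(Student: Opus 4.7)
The plan is to use the Product theorem as a black box and handle the simple groups family by family. By the classification, one reduces to alternating and Lie type groups (the sporadics being finite in number). So fix $L$ of Lie type of rank $r$ with symmetric generating set $S$, and after enlarging $S$ by $1$ (which affects the diameter by at most an additive constant) assume $1 \in S$, so that the sequence $A_0 = S$, $A_{k+1} = A_k^3$ is nested and satisfies $A_k \subseteq S^{3^k}$. While $A_k \ne L$, the Product theorem yields $|A_{k+1}| \ge |A_k|^{1+\e}$ with $\e = \e(r)$; iterating gives $|A_k| \ge 2^{(1+\e)^k}$, so $A_k = L$ after $k = O(\log \log |L|)$ tripling steps. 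This bounds $\diam(Cay(L,S)) \le 3^k = (\log |L|)^{c(r)}$, settling the conjecture whenever $r$ is bounded.

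The main obstacles are the two regimes in which the rank is not bounded. For the alternating groups $A_n$ the Product theorem is unavailable in a form directly usable here, and the current record is the quasi-polynomial estimate $\exp\!\big((\log n)^{O(1)}\big)$ of Helfgott--Seress, far from the polylogarithmic target $\big(\log(n!/2)\big)^{O(1)}$; this case appears to require a genuinely different, symmetric-group-theoretic ingredient. For Lie type groups of unbounded rank, the dependence $\e = \e(r)$ is sharp, as witnessed by the $SL(n,3)$ construction of size $2^{n-1}+4$ with $|A^3| < 100|A|$ recalled above, so the naive tripling iteration breaks down completely.

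A natural strategy for the unbounded-rank case is to replace the Product theorem by a stronger inverse statement---of the kind proposed in this paper, via the reformulation already highlighted in the introduction---which extracts genuine algebraic structure (a bounded-index normal subgroup together with a polynomial coset decomposition) from non-growth. One would then combine this with a recursive diameter estimate, reducing the problem on $L$ to one on smaller subquotients arising from the inverse theorem, and controlling the recursion depth uniformly in the rank. Making such a recursion collapse to a polylogarithmic bound---in particular ensuring that the ``polynomially many cosets'' are absorbed cheaply at every level---is the hard part, and is what I would expect to be the principal technical obstacle to a full proof via the linear-group framework of this paper.
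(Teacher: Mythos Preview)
The statement you are attempting to prove is labelled a \emph{conjecture} in the paper, and the paper does not prove it; it remains open in general. What the paper does prove is the corollary that Babai's conjecture holds for simple groups of Lie type of \emph{bounded} rank, and your first paragraph is exactly the standard unpacking of the phrase ``the Product theorem easily implies\dots'': triple repeatedly, use $|A^3|\gg|A|^{1+\e(r)}$ at each step, and stop after $O_r(\log\log|L|)$ steps. That part is fine and matches the paper.

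The rest of your write-up, however, is not a proof but an honest assessment of why the full conjecture is out of reach by these methods. You correctly identify the two obstructions --- alternating groups (where only the quasi-polynomial Helfgott--Seress bound is known) and Lie type groups of unbounded rank (where the $SL(n,3)$ example shows $\e(r)\to0$) --- and then sketch a hoped-for recursive strategy via an inverse theorem, while conceding that making the recursion close with polylogarithmic loss is ``the hard part''. That is an accurate summary of the state of affairs, but it is a research programme, not a proof. So as a proof of the stated conjecture your proposal has a genuine gap: the alternating and unbounded-rank cases are simply not handled, and no argument in the paper or in your proposal closes them.
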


The Product theorem easily implies the following.

\begin{cor}
 \fref{conj:babai} holds for simple groups of Lie type of bounded rank.
 %This follows quite easily from the Product theorem [BGT], [PSz].
\end{cor}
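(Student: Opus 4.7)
The plan is to iterate \fref{thm:Product-theorem} in the now-standard ``tripling'' fashion. Let $L$ be a finite simple group of Lie type of rank $r$ and $S$ a symmetric generating set; enlarging $S$ by the identity (which changes the diameter by at most $1$) we may assume $1 \in S$. Set $A_0 = S$ and $A_{i+1} = A_i^3$, so each $A_i = S^{3^i}$ is a symmetric set containing $1$ and generating $L$. It suffices to prove that the smallest $N$ with $A_N = L$ satisfies $N = O_r(\log\log|L|)$, for then $\diam\bigl(Cay(L,S)\bigr) \le 3^N = (\log|L|)^{O_r(1)}$, as required by \fref{conj:babai}.

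The first observation is that $A_i \subseteq A_{i+1}$, and if $|A_{i+1}| = |A_i|$ then $A_i\cdot A_i \subseteq A_i^3 = A_i$, making $A_i$ a subgroup containing $S$, hence equal to $L$. So while $A_i \neq L$ we at least have the ``free'' bound $|A_{i+1}| \ge |A_i|+1$. Applying \fref{thm:Product-theorem} to $A_i$ gives the stronger alternative: either $A_{i+1}=L$, or $|A_{i+1}| \ge c|A_i|^{1+\e}$ with constants $c,\e>0$ depending only on $r$.

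I would then split the iteration into two phases. In the first, $|A_i|$ is still below some threshold $T=T(r)$ (roughly $c^{-2/\e}$), where the Product theorem bound can actually be weaker than the trivial increment bound; but the trivial bound $|A_{i+1}|\ge|A_i|+1$ alone drives $|A_i|$ past $T$ in at most $T=O_r(1)$ steps. In the second phase, $|A_i|\ge T$ implies $c|A_i|^{1+\e} \ge |A_i|^{1+\e/2}$, so $\log|A_i|$ is multiplied by a factor of at least $1+\e/2$ at each step; hence after $O_r(\log\log|L|)$ further steps the constraint $|A_i|\le|L|$ is violated unless $A_i=L$. Adding the two phases yields $N = O_r(\log\log|L|)$.

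The only genuine obstacle here is the Product theorem itself --- once it is in hand, the deduction is the routine tripling trick originally used by Helfgott for $PSL(2,p)$. The lone technicality to watch is precisely the small-$|A_i|$ phase, made necessary by the fact that the constant $c$ supplied by \fref{thm:Product-theorem} is not a priori at least $1$, so one cannot immediately iterate the multiplicative bound; the free additive growth coming from symmetry and from $1\in A_i$ is exactly what closes this gap.
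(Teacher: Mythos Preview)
Your argument is correct and is exactly the standard ``tripling'' deduction the paper has in mind; the paper itself gives no proof beyond the phrase ``easily implies the following.'' Your care with the small-$|A_i|$ phase (to absorb the implied constant) is a nice touch, though in practice one could also simply note that $|S|\ge 2$ forces $\log|A_i|$ bounded below and fold the constant into the $O_r(\cdot)$.
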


The following conjecture
(see \cite{Liebeck-Nikolov-Shalev:Product-decomp-finite-simple-groups:2012})
is rather more recent.

\begin{conj} [Liebeck, Nikolov, Shalev]
  \label{conj:set-product-decomposition}
There exists an absolute constant $c$ such that if $L$ is a
finite simple group and $A$ is a subset of $L$ of size at least two,
then $L$ is a product of $N$ conjugates of $A$
for some $N\leq c\log|L|/ \log |A|$.
\end{conj}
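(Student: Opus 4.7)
The plan is to combine \fref{thm:Product-theorem} with classical tools on generation and conjugacy covering of finite simple groups. The statement splits naturally into two regimes: when $|A|$ is a small power of $|L|$ the bound on $N$ is generous, so one exploits \emph{growth} by repeatedly multiplying $A$ with its conjugates; when $|A|$ is comparable to $|L|$ the bound on $N$ is $O(1)$, and boundedly many conjugates of $A$ must fill out $L$, which follows from a direct pigeonhole argument comparing $|A|\cdot|gA|$ to $|L|$.

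For the growth regime I would first pass to a symmetric set containing the identity, losing only a constant factor in $N$, and then adjoin $O(1)$ conjugates $A^{g_1},\dots,A^{g_k}$ so that the enlarged set $B$ generates $L$; this uses the well-known fact that a bounded number of conjugates of any nontrivial subset of a finite simple group generate the group. For simple groups of Lie type of bounded rank, \fref{thm:Product-theorem} now applies to $B$ and gives $|B^3|\gg|B|^{1+\e}$ unless $B^3=L$. Iterating this tripling $m$ times produces a set of size at least $\min\bigl(|L|,|B|^{(1+\e)^m}\bigr)$, so after $m=O(\log|L|/\log|A|)$ steps the resulting set covers at least half of $L$. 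One more product then fills $L$ by pigeonhole, and unfolding the tripling back into conjugates of $A$ gives the bound $N\le c\log|L|/\log|A|$.

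The main obstacle is the case of groups of Lie type of \emph{unbounded} rank, where the constants in \fref{thm:Product-theorem} depend on the rank $r$ and the above iteration is no longer uniform in $|L|$. Here one would need structural information about conjugacy classes that is independent of the rank, for instance width estimates in the spirit of Liebeck--Shalev together with character-theoretic mixing bounds of Gowers type; these govern how fast products of conjugacy classes (or of arbitrary large subsets) fill out a quasirandom group, and can substitute for the Product theorem in the high-rank classical case. Alternating groups require yet another argument, using the action on $k$-subsets and Bochert-type diameter bounds. Merging the three cases into a single uniform bound is the genuinely difficult step.
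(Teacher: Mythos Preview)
The statement you are trying to prove is a \emph{conjecture}; the paper does not prove it and contains no proof to compare against. What the paper does establish is the theorem immediately following \fref{conj:set-product-decomposition}, namely the special case where $L$ is a simple group of Lie type of \emph{bounded rank}, and it attributes that proof to \cite{Gill-Pyber-Short-Szabó:Shalev-conj:2012} with the one-line summary ``using the Product theorem and an extra trick which handles small subsets $S$''. Your bounded-rank argument --- adjoin $O(1)$ conjugates of $A$ to obtain a generating set, iterate \fref{thm:Product-theorem} until the product exceeds $|L|/2$, then finish by pigeonhole --- is essentially that strategy, and the ``extra trick'' in the paper's summary corresponds to the generation step you isolate.

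But your proposal is not a proof of the conjecture, and you say so yourself. You correctly note that the growth constant $\e$ in \fref{thm:Product-theorem} depends on the rank, so the iteration is non-uniform for high-rank classical groups; you then list plausible substitutes (Liebeck--Shalev covering numbers, Gowers-type mixing, Bochert-type bounds for alternating groups) and concede that ``merging the three cases into a single uniform bound is the genuinely difficult step''. That concession is accurate: none of the ingredients you name is known to yield the required uniform $c\log|L|/\log|A|$ bound for \emph{arbitrary} subsets $A$ in high rank or in $A_n$, and the conjecture was open at the time of the paper. What you have written is therefore a correct proof outline for the bounded-rank theorem together with a research programme for the remaining cases, not a proof of \fref{conj:set-product-decomposition}.
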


By a deep (and widely applied) theorem of Liebeck and Shalev
\cite{Liebeck-Shalev:Diameter-of-simple-groups:2001} the conjecture
holds when $S$ is a conjugacy class or, more generally, a normal
subset (that is a union of conjugacy classes).
  
\begin{thm}
 \fref{conj:set-product-decomposition} holds for simple groups of Lie
 type of bounded rank.
\end{thm}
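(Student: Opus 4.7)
The plan is to combine \fref{thm:Product-theorem} with a Gowers--Babai--Nikolov--Pyber type covering result for quasi-random groups, via an iterated \emph{doubling} scheme whose cost in conjugates stays linear in $\log|L|/\log|A|$. The key observation is that a single \emph{conjugated} product $B\cdot B^g$ can essentially square the size of $B$; iterating this yields multiplicative (rather than polynomial) growth at each step and hence the conjectured linear bound. Naive iteration of \fref{thm:Product-theorem} via $B\mapsto B^3$ would only produce a polynomial-in-$\log|L|/\log|A|$ bound, which is insufficient.

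Replace $A$ by $\tilde A := A\cup A^{-1}\cup\{1\}$, which has $|\tilde A|\le 3|A|$ and only costs a constant factor in $c$. The heart of the argument is a \emph{doubling lemma}: there exists $c_0=c_0(r)>0$ such that for every $B\subseteq L$ with $|B|\ge 2$, some $g\in L$ satisfies $|B\cdot B^g|\ge c_0\,\min\bigl(|B|^2,\,|L|\bigr)$. I would prove this by a second-moment argument. Writing $r_g(c) := |\{(b_1,b_2)\in B^2 : b_1 b_2^g = c\}|$, Cauchy--Schwarz gives $|B B^g|\ge |B|^4/\sum_c r_g(c)^2$, and a direct computation evaluates $\mathbb{E}_g\sum_c r_g(c)^2$ as
\[
    |B|^2 \;+\; \sum_{K\neq\{1\}}\frac{R_K\,S_K}{|K|},
\]
the sum running over non-trivial conjugacy classes $K$ of $L$, with $R_K,S_K$ convolution-type quantities derived from $1_B\ast 1_{B^{-1}}$ whose totals are at most $|B|^2$. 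Every non-trivial conjugacy class of a rank-$r$ simple Lie-type group has size $\ge c_r\,|L|^{\alpha(r)}$ for some $\alpha(r)>0$, and in the ``equidistributed'' regime this bounds the non-trivial contribution by $O(|B|^4/|L|)$, giving the desired estimate.

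With the doubling lemma in hand, set $B_0 := \tilde A$ and recursively $B_{i+1}:=B_iB_i^{g_i}$. Then $B_i$ is a product of $2^i$ conjugates of $\tilde A$, and $|B_i|$ grows doubly exponentially in $i$ until it reaches size $\ge\sqrt{|L|}$ after some $m$ iterations with $2^m=O(\log|L|/\log|A|)$. A further $O(1)$ conjugated products bring $|B_{m'}|$ within a constant fraction of $|L|$, and a final product covers $L$: since $L$ is $|L|^{\Omega(1/r)}$-quasi-random, any three subsets of density $\ge c_0$ satisfy $X_1X_2X_3 = L$ by the Gowers--Babai--Nikolov--Pyber theorem. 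The total number of conjugates is $O(\log|L|/\log|A|)$, as required.

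The main obstacle is handling the ``concentrated'' case in the doubling lemma: if $1_B\ast 1_{B^{-1}}$ is heavily supported on a small conjugacy class, the non-trivial contribution to $\mathbb{E}_g\sum_c r_g(c)^2$ can dominate $|B|^2$ and the averaging breaks down. Such concentration forces $B\cdot B^{-1}$ to behave like an approximate subgroup, and by the structural (``polynomial inverse'') reformulation of \fref{thm:Product-theorem} recalled in the introduction, $B$ must then be trapped in polynomially many cosets of a proper normal subgroup of $L$. Since $L$ is simple, the only normal subgroups are trivial and $L$ itself, so this forces $|B|$ to be either small enough to be handled by a direct argument after a few doublings, or already of size $\ge c|L|^{1-o(1)}$, placing us in the Gowers regime. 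Quantifying this dichotomy uniformly in $|B|$ is the core technical work of the proof.
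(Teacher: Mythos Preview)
Your overall architecture --- a conjugated doubling scheme $B\mapsto BB^g$ to obtain a linear cost in $\log|L|/\log|A|$, finished off by quasirandomness --- is the right one, and matches in spirit what the paper attributes to \cite{Gill-Pyber-Short-Szabó:Shalev-conj:2012} (``the Product theorem and an extra trick which handles small subsets''). You are also right that naive iteration of \fref{thm:Product-theorem} only yields a bound polynomial in $\log|L|/\log|A|$, which is not enough. However, your doubling lemma is stated too strongly, and the patch you propose for its failure does not work as written.

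The second-moment computation you sketch is correct, but it gives
\[
  \mathbb{E}_g\sum_c r_g(c)^2 \;\le\; |B|^2 \;+\; \frac{|B|^4}{|K_{\min}|}\,,
\]
where $|K_{\min}|$ is the size of the smallest nontrivial conjugacy class of $L$. Hence some $g$ satisfies $|BB^g|\ge \tfrac12\min\bigl(|B|^2,\,|K_{\min}|\bigr)$, \emph{not} $\tfrac12\min\bigl(|B|^2,|L|\bigr)$. For $L$ of rank $r$ one has $|K_{\min}|\asymp |L|^{\alpha}$ with $\alpha=\alpha(r)<1$, so the doubling genuinely stalls once $|B|$ exceeds $|L|^{\alpha/2}$, well short of the Gowers threshold.

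Your proposed remedy --- ``concentration of $1_B*1_{B^{-1}}$ on a small class forces $BB^{-1}$ to be an approximate subgroup, so by \fref{thm:Product-theorem} $|B|$ is tiny or $\ge|L|^{1-o(1)}$'' --- has two gaps. First, $L^1$-concentration of $1_B*1_{B^{-1}}$ on a conjugacy class $K$ does not bound $|BB^{-1}|$; it only yields large multiplicative energy $E(B)\ge R_K^2/|K|$, and passing to an approximate group costs a Balog--Szemer\'edi--Gowers step with losses polynomial in $|K|/|B|$, which is not small in the relevant range. Second, \fref{thm:Product-theorem} applies only to \emph{generating} sets, and nothing in your iteration ensures that $B_m$ (let alone its BSG-refined subset) generates $L$; the claimed dichotomy does not follow.

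The actual fix is simpler than your patch, and is where \fref{thm:Product-theorem} really enters. The second-moment doubling already delivers some $B_m$ with $|B_m|\ge|L|^{\alpha/2}$ using $2^m=O_r\bigl(\log|L|/\log|A|\bigr)$ conjugates of $\tilde A$; this \emph{is} the ``trick for small subsets''. From here only a \emph{bounded} (in $r$) number of further conjugated products is needed. First enlarge $B_m$ by $O_r(1)$ further conjugates of $\tilde A$ so that the resulting set generates $L$ (for bounded rank, $L$ is generated by $O_r(1)$ conjugates of any nontrivial element, e.g.\ via the Lawther--Liebeck covering result quoted in the paper). Then apply \fref{thm:Product-theorem} and cube repeatedly: since $\log|L|/\log|B_m|\le 2/\alpha=O_r(1)$, only $O_r(1)$ iterations are needed to reach the Gowers regime, and the polynomial loss from naive iteration is harmless here. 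The total number of conjugates of $A$ used remains $O_r\bigl(\log|L|/\log|A|\bigr)$.
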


This is proved in \cite{Gill-Pyber-Short-Szabó:Shalev-conj:2012}
using the Product theorem and an extra trick which handles small subsets $S$.

Breuillard, Green, Guralnick and Tao
\cite{Breuillard-Green-Guralnick-Tao:Expansion-simple-groups:2012}
have announced a deep result of similar flavour.

\begin{thm}\label{thm:random-expander-BGGT}
  Let $G$ be a simple group of Lie type of rank $r$.
  Then for two random elements $x$, $y$
  the Cayley graph $Cay\big(G,\{x,y\}\big)$ is an $\varepsilon(r)$
  expander with probability going to $1$ as $|G| \to \infty$,
  where $\varepsilon(r)$ depends only on $r$.
\end{thm}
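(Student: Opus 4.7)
The plan is to verify the three hypotheses of the Bourgain--Gamburd spectral gap machine for the Cayley graph $Cay(G,\{x,y,x^{-1},y^{-1}\})$, with all parameters depending only on the rank $r$. These hypotheses are: (a) \emph{growth} of generating sets in $G$; (b) \emph{quasirandomness}, i.e.\ a lower bound $|G|^{\kappa(r)}$ on the minimal dimension of a non-trivial complex representation of $G$; and (c) \emph{escape from subgroups}, meaning that for $\ell=C(r)\log|G|$ the $\ell$-fold convolution $\mu^{(\ell)}$ of the uniform measure on $\{x,y,x^{-1},y^{-1}\}$ assigns mass at most $|G|^{-\delta(r)}$ to every coset of every proper subgroup of $G$. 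Once all three are in place, the standard flattening lemma combined with (a) forces $\|\mu^{(\ell)}\|_2^2 \le |G|^{-1+o(1)}$, and (b) then upgrades this $L^2$-smallness to a uniform spectral gap, which is equivalent to expansion.

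Hypothesis (a) is precisely \fref{thm:Product-theorem}, while (b) follows from the classical Landazuri--Seitz lower bounds on minimal representation dimensions of finite simple groups of Lie type. The randomness of $x,y$ is used only to guarantee that they generate $G$ and to establish (c). The former is a theorem of Dixon, Kantor--Lubotzky and Liebeck--Shalev on random generation, giving $\Pr\big(\langle x,y\rangle=G\big)\to 1$. For (c) one appeals to the classification of maximal subgroups of $G$ in fixed rank: every maximal $M<G$ either lies inside the $\Fq$-points of a proper algebraic subgroup of the ambient group $\mathbf{G}$, or belongs to a bounded list of exceptional subgroups of order at most $|G|^{1-\gamma(r)}$. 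In the first case a Lang--Weil bound gives $|M|\le c(r)\cdot |G|^{1-1/\dim\mathbf{G}}$, so a random word of length $\ell$ lands in $M$ with probability that decays geometrically in $\ell$; a union bound over the $O_r(1)$ conjugacy classes of maximal subgroups then yields (c).

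The main obstacle is making the escape-from-subgroups estimate uniform in $q=|\Fq|$. A naive free-group heuristic fails because $x$ and $y$ do satisfy accidental relations inside $G$. The right approach, already implicit in Bourgain--Gamburd's treatment of $SL_2(\Fp)$ and developed in full generality in the cited work of Breuillard--Green--Guralnick--Tao, is to combine an inductive ``random words escape from subvarieties'' lemma, proved by exploiting that a generic pair of elements of $G$ is Zariski-dense in $\mathbf{G}$, with Larsen--Pink style dimension bounds that allow a uniform treatment of all positive-dimensional subgroups at once. Checking that the error terms in this inductive escape estimate are uniform in $q$, and that the structural classification of maximal subgroups holds with bounds independent of $q$, is the technical heart of the proof; once this is done, the three inputs feed into the Bourgain--Gamburd machine with rank-uniform constants and yield the claimed $\e(r)$-expansion.
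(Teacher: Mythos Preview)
The paper does not itself prove this theorem: it records it as an announced result of Breuillard--Green--Guralnick--Tao and names only the three ingredients --- the Product theorem, the Bourgain--Gamburd expansion machine, and ``some results in \cite{Breuillard-Green-Guralnick-Tao:strongly-dense-free-semisimple:2012}'' on strongly dense free subgroups of semisimple groups. Your outline matches the first two ingredients exactly (growth from \fref{thm:Product-theorem}, quasirandomness from Landazuri--Seitz, and the Bourgain--Gamburd scheme to combine them), so on those points there is nothing to compare.

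The one place your sketch diverges from the paper's description is the escape-from-subgroups step. The paper points specifically to the \emph{strongly dense} input: for a generic pair $(x,y)$ lifted to the ambient algebraic group, every non-abelian subgroup of $\langle x,y\rangle$ is Zariski-dense, and this is what drives the uniform escape estimate in the cited BGGT work. Your first pass instead proposes bounding maximal subgroups via the classification plus Lang--Weil and then doing a union bound over the $O_r(1)$ conjugacy classes of maximals. That framing is not wrong as motivation, but note that a union over \emph{classes} still leaves you with one escape estimate per individual coset, and getting that estimate uniformly in $q$ is exactly where the strongly-dense machinery (which you do eventually invoke) becomes essential rather than optional. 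So your proposal is correct in substance and lands on the same proof; the only adjustment is that the maximal-subgroup/Lang--Weil route is heuristic scaffolding, and the actual argument the paper is pointing to runs through strong density from the outset.
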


As an open problem Lubotzky \cite{Lubotzky:expander-pure-applied:2012}
suggested deciding whether this holds for all nonabelian finite
simple groups with an absolute constant $\varepsilon$.

The proof of the above theorem is based on the Product theorem,
some results in
\cite{Breuillard-Green-Guralnick-Tao:strongly-dense-free-semisimple:2012}
and the proof scheme
(termed the Bourgain-Gamburd expansion machine in
\cite{Tao:blog-Bourgain-Gamburd-expansion-machine:2012})
first used in the proof of the following theorem.
 
\begin{thm} [Bourgain-Gamburd
  \cite{Bourgain-Gambourd:Uniform-expansion-SL2-p:2008}]
  \label{thm:Bourgain-Gamburd-SL2-p}
  For any $0<\delta\in\BR$ there exists
  $\varepsilon=\varepsilon(\delta)\in\BR$
  such that for every prime $p$,
  if $S$ is a symmetric set of generators of $G=SL(2,p)$
  such that girth of the Cayley graph $Cay\big(G;S\big)$
  is at least $\delta\log p$ then 
  $Cay\big(G;S\big)$
  is an $\varepsilon$-expander.
\end{thm}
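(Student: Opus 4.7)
The plan is to run the now-standard Bourgain--Gamburd expansion machine, whose three ingredients in the case $G=SL(2,p)$ are: (i) a non-concentration estimate derived from the girth hypothesis, (ii) an $\ell^2$-flattening estimate derived from the Product theorem (in the form of Helfgott's theorem for $SL(2,p)$, which is the $r=1$ case of \fref{thm:Product-theorem}), and (iii) quasirandomness of $SL(2,p)$, i.e.\ the fact that every non-trivial irreducible representation has dimension at least $(p-1)/2$. A spectral gap for $Cay(G,S)$ is equivalent to an upper bound $\|\mu^{(2k)}\|_2^2 \le (1+\eta)/|G|$ for $k\sim C\log p$, where $\mu$ is the uniform probability measure on $S$ and $\mu^{(2k)}=\mu*\mu*\cdots*\mu$; our task is to prove such an $\ell^2$-mixing bound.

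The first step is to use the girth assumption to obtain non-concentration on proper subgroups and their cosets. If $\mathrm{girth}\bigl(Cay(G,S)\bigr)\ge\delta\log p$, then every reduced word of length at most $\tfrac{\delta}{2}\log p-1$ in $S$ represents a distinct element, so $|S^{k}|\ge (|S|-1)^{k}$ for such $k$ and the random walk on $S$ behaves for a short time like a walk on a free group. From this I deduce that for $k_0=c\delta\log p$ and any proper subgroup $H\le G$ one has $\mu^{(2k_0)}(gH)\le |G|^{-\kappa}$ uniformly in $g$, for some $\kappa=\kappa(\delta)>0$. This uses Dickson's classification of the subgroups of $SL(2,p)$: one rules out Borel and torus-normalizer cosets by the free-group behavior (elements of $S^{k}$ lying in a cyclic or solvable group force relations shorter than the girth), and one rules out the bounded-order exceptional subgroups because they have size $O(1)\ll |S|^{k_0}$.

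The second and most substantial step is to turn non-concentration into $\ell^2$-flattening. Starting from $\nu_0=\mu^{(2k_0)}$, I want to show that if $\|\nu*\nu\|_2^2\ge K^{-1}\|\nu\|_2^2$ for $\nu=\nu_j$, then one can extract a symmetric set $A\subset G$ with $|A|\asymp \|\nu\|_2^{-2}$ and $|A\cdot A\cdot A|\le K^{O(1)}|A|$ (the non-commutative Balog--Szemer\'edi--Gowers and Tao's non-commutative multiplicative-energy machinery). The Product theorem then forces either $A^{3}=G$, which combined with $|A|\le |G|^{1-\kappa'}$ gives a contradiction, or $|A^{3}|\ge |A|^{1+\e}$, contradicting the tripling bound $K^{O(1)}$ as soon as $K$ is taken to be a small enough power of $|G|$. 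Consequently each convolution roughly squares: $\|\nu_{j+1}\|_2^2\le |G|^{-\e'}\|\nu_j\|_2^2$ where $\nu_{j+1}=\nu_j*\nu_j$. Iterating $O(\log\log p)$ times reaches $\|\mu^{(m)}\|_2^2\le |G|^{-1+\tau}$ for some $m\le C(\delta)\log p$ and arbitrarily small $\tau>0$, while simultaneously verifying non-concentration is preserved along the way.

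The third step is quick: by Frobenius, $SL(2,p)$ has no non-trivial representation of dimension less than $(p-1)/2\gg |G|^{1/3}$, so by the standard Sarnak--Xue / Frobenius trick the second-largest eigenvalue $\lambda$ of the convolution operator by $\mu$ satisfies $\lambda^{2m}\cdot\tfrac{p-1}{2}\le |G|\cdot\|\mu^{(m)}\|_2^2\le |G|^{\tau}$, whence $\lambda\le 1-\e(\delta)$ for a suitable $\e(\delta)>0$, which is exactly the desired expansion. The main obstacle is the flattening step: making the Product theorem usable here requires the full non-commutative Balog--Szemer\'edi--Gowers apparatus together with a careful bookkeeping argument that shows the ``small-tripling set'' $A$ produced by BSG is itself not concentrated in a proper subgroup, so that the Product theorem's dichotomy actually yields a contradiction.
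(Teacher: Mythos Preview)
The paper does not prove this theorem; it is quoted as a result of Bourgain and Gamburd \cite{Bourgain-Gambourd:Uniform-expansion-SL2-p:2008}, and the surrounding text only refers to the ``Bourgain--Gamburd expansion machine'' as a proof scheme used elsewhere. So there is nothing in the paper to compare your argument against.

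That said, your sketch is a faithful outline of the original Bourgain--Gamburd proof: girth $\Rightarrow$ free-group behaviour for short times $\Rightarrow$ non-concentration on cosets of proper subgroups; Helfgott's $SL(2,p)$ product theorem plus non-commutative Balog--Szemer\'edi--Gowers $\Rightarrow$ $\ell^2$-flattening; Frobenius' lower bound $\degC\bigl(SL(2,p)\bigr)\ge(p-1)/2$ $\Rightarrow$ spectral gap via the Sarnak--Xue high-multiplicity argument. Two small points worth tightening. First, the number of flattening iterations is a constant $O_\delta(1)$, not $O(\log\log p)$: each doubling $\nu\mapsto\nu*\nu$ gains a fixed power $|G|^{-\e'}$ in $\|\cdot\|_2^2$, so boundedly many steps suffice to reach $\|\mu^{(m)}\|_2^2\le|G|^{-1+\tau}$ with $m\le C(\delta)\log p$; your phrasing ``$O(\log\log p)$ times'' would give $m$ of size $(\log p)^{O(1)}$, which is more than you want. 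Second, the non-concentration claim for Borel and torus cosets is not literally that ``elements of $S^k$ lying in a solvable group force short relations''; rather, the girth hypothesis lets you pull back to the free group on $|S|/2$ generators, and there one uses a Kesten-type exponential decay for the probability that a random walk stays in (a coset of) a solvable subgroup. With these adjustments your sketch is correct.
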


It seems quite plausible that this result extends to all nonabelian
finite simple groups. In view of the girth bounds in
\cite{Gamburd-Hoory-Shahshahani-Shalev-Virag:Girth-random-Cayley:2009}
this would imply \fref{thm:random-expander-BGGT}.
At present however such an extension is not even known to hold
for the groups $SL(2,q)$, $q$ a prime-power.

For the above applications of the Product theorem it is essential that
the size of a generating set $A$ is bounded by a polynomial of the
tripling constant $K=|A^3|/|A|$ (unless $A$ is very large). For a
discussion of related issues by Tao see
\cite{Tao:blog-Bourgain-Gamburd-expansion-machine:2012}.
Guided by this insight, and remarks of
Helfgott
\cite[page 764]{Hefgott:SL-3-p:2011},
and Breuillard, Green, Tao
\cite[Remark 1.8]{Breuillard-Green-Tao:structure-of-approximate-groups:2012},
and various discussions on Tao's blog,
in 2012 we proved the following
Polynomial Inverse theorem \cite{PyberSzabo:2012:HL-sejtes-soluble}.

\begin{thm} [Pyber, Szab\'o]
  \label{thm:PSz-Polynomial-Inverse-thm}
  Let $S$ be a symmetric subset of $GL(n,\BF)$
  satisfying
  $|S^3|\le K|S|$ for some $K\ge1$,
  where $\BF$ is an arbitrary field.
  Then $S$ is contained in the union of polynomially many
  (more precisely $K^{c(n)}$)
  cosets of a finite-by-soluble subgroup $\Gamma$ normalised by $S$.

  Moreover, $\Gamma$ has a finite subgroup $P$ normalised by $S$
  such that $\Gamma/P$ is soluble,
  and $S^3$ contains a coset of $P$.
\end{thm}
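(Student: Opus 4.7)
The plan is to combine the Product theorem \fref{thm:Product-theorem} with Larsen--Pink style intersection bounds and an analysis of the Zariski closure of $\Span{S}$ inside $GL(n,\BF)$. First I would invoke the Plünnecke--Ruzsa inequalities to pass from $S$ to a symmetric $K^{O(1)}$-approximate subgroup $H\subseteq S^{O(1)}$ with $|H|\asymp|S|$, such that $S$ is covered by $K^{O(1)}$ translates of $H$. Setting $\BG=\cl{\Span{S}}$, its complexity as an algebraic subgroup of $GL(n,\BF)$ is bounded by $c(n)$; hence its identity component $\BG^\circ$ has index at most $c(n)$, and after replacing $S$ by a coset intersection of bounded index we may assume $\BG=\BG^\circ$ is connected. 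Consider its Levi decomposition with solvable radical $\BR\normaleq\BG$ and semisimple quotient $\BL=\BG/\BR$, whose adjoint form $\BL^{\mathrm{ad}}$ is a product of simple algebraic groups of rank at most $n$.

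The core step is to control the image $\bar S\subseteq\BL^{\mathrm{ad}}(\BF)$. For each almost-simple factor I would apply the Product theorem in the coset reformulation recalled above: either $\bar S$ projects onto a subset generating a finite simple group of Lie type in that factor, in which case the theorem forces the projection to lie in $K^{c(n)}$ cosets of a finite subgroup; or the projection is contained in a proper algebraic subgroup, which shrinks $\BG$ and lets us induct on $\dim\BG$. Combining across factors, $\bar S$ is covered by $K^{c(n)}$ cosets of a finite subgroup $\bar P\le\BL^{\mathrm{ad}}(\BF)$, whose preimage $\Gamma_0\le\BG(\BF)$ is finite-by-soluble and contains a positive fraction of the $S$-cosets inside $\BG(\BF)$.

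To produce the required $\Gamma$ normalised by $S$, I would replace $\Gamma_0$ by a canonical subgroup: since $\BR$ and the preimage in $\BG$ of the algebraic normaliser of $\bar P$ are defined intrinsically from $\BG$ (which is itself the Zariski closure of a set containing $S$), the resulting $\Gamma$ is automatically $S$-invariant. The finite subgroup $P\normaleq\Gamma$ is the preimage of $\bar P$ modulo the connected solvable part $\BR(\BF)\cap\Gamma$, and a pigeonhole argument applied to the $K^{c(n)}$ cosets of $P$ covering $S^3$ (each such coset having size $|P|$ while $|S^3|\le K|S|$) yields some coset $Pg$ entirely inside $S^3$.

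The main obstacle is the dichotomy in the middle paragraph: translating ``non-growing inside a simple algebraic group'' into an algebraic containment statement in a proper subgroup. This is precisely the strong trichotomy for approximate subgroups of linear groups established in our earlier paper, and it requires delicate escape-from-subvarieties arguments together with Larsen--Pink type estimates in positive characteristic, including careful treatment of imperfect base fields and the distinction between separable and inseparable morphisms. A secondary difficulty is keeping the exponent in $K^{c(n)}$ polynomial throughout the induction, so that each descent step into a proper algebraic subgroup raises the exponent only additively rather than by a factor depending on $n$.
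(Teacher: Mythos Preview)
Your construction has the extension the wrong way around. The preimage $\Gamma_0$ of a finite $\bar P\le\BL^{\mathrm{ad}}(\BF)$ under $\BG\to\BL^{\mathrm{ad}}$ is an extension of $\bar P$ by (something containing) the soluble group $\BR(\BF)$: soluble kernel, finite quotient --- that is, \emph{soluble-by-finite}. The theorem asks for \emph{finite-by-soluble}: a finite normal $P$ with $\Gamma/P$ soluble and a coset of $P$ inside $S^3$. Your sentence defining $P$ as ``the preimage of $\bar P$ modulo the connected solvable part'' does not produce a finite subgroup of $\Gamma$. In characteristic zero one can flip the two notions at bounded-index cost, since virtually soluble subgroups of $GL(n,\BC)$ have soluble subgroups of $n$-bounded index; in positive characteristic this fails, and bridging the gap is exactly the hard half of the theorem. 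A secondary issue: your dichotomy on each simple factor (either the projection generates a finite simple group of Lie type, or it lies in a proper algebraic subgroup) is incomplete --- over an infinite field the projection of $\bar S$ can be Zariski dense while generating an infinite group, and then the Product theorem says nothing. The ``strong trichotomy'' you invoke to plug this is in fact proved by the CCC/concentration machinery below, not by a Levi-quotient reduction, so importing it here is assuming the result rather than executing your outline.

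The paper's route is genuinely different in both halves. First, the soluble-by-finite statement (\fref{thm:Hrushowsky-type}) is proved directly inside the ambient algebraic group, with no passage to a semisimple quotient: the Spreading theorem and a dichotomy based on CCC-subgroups (connected centralisers of connected subgroups, playing the role of maximal tori in a general $G$) locate an $S$-normalised closed connected subgroup in which a bounded power of $S$ has large concentration, and one inducts on dimension. Second, the upgrade from soluble-by-finite to finite-by-soluble in characteristic $p$ goes through finite linear groups: Malcev's theorem (\fref{prop:Malcev}) reduces to the finite case, Weisfeiler's structure theorem (\fref{prop:Weisfeiler}) isolates a normal subgroup whose quotient by $O_p$ lies in $Lie^{*}(p)$, and the affine conjugation trick of \fref{lem:conjugation-trick} combined with the Product theorem shows that when $\degC$ is large a bounded power of $S$ already contains a full coset of the finite obstruction $P$. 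These three ingredients --- Weisfeiler, \fref{lem:conjugation-trick}, and the Malcev reduction --- are absent from your proposal and are precisely what is needed to close the gap you leave.
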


The theorem extends and unifies several earlier results.
Most importantly (to us)
it contains the Product theorem (for symmetric sets)
as a special case. 
For subsets of $SL(2,p)$ and $SL(3,p)$
similar results were obtained by Helfgott
\cite{Helfgott:SL-2-p:2008, Hefgott:SL-3-p:2011}.

In characteristic $0$ the above theorem was first proved by
Breuillard, Green and Tao \cite{BrGrTao:ProductTheorem:2011}.
The earliest result in this direction for subsets of $SL(2,\BR)$
is due to Elekes and Kir\'aly
\cite{Elekes-Kiraly:Projective-mappings:2001}.
The proof in \cite{BrGrTao:ProductTheorem:2011}
uses the fact that 
a virtually soluble subgroup of $SL(n,\BC)$
has a soluble subgroup of $n$-bounded index,
which is no longer true in positive characteristic.

Finally the theorem implies a result of Hrushovski for linear groups
over arbitrary fields obtained by model-theoretic tools
\cite{Hrushovski:approximate-subgroups:2012}.
In his theorem the structure of $\Gamma$ is described  in a less
precise way and the number of covering cosets is only bounded by some
large function of $n$ and $K$. 

It seems possible that a result similar to
\fref{thm:PSz-Polynomial-Inverse-thm}
can be proved with $H$ nilpotent
(possibly at the cost of loosing the
normality of $H$ and $P$ in $\langle S\rangle$).
Indeed in characteristic $0$ such a result follows by combining
\cite{BrGrTao:ProductTheorem:2011}
with
\cite{Breuillard-Green:Approximate-groups-II-solvable-linear:2011},
and for prime fields it follows from 
\cite{PyberSzabo:2010:growth} and the results of 
\cite{Gill-Helfgott:solvable-growth:2010} for soluble
groups. In general this may be technically quite challenging even
though soluble linear groups have a nilpotent-by-abelian
subgroup of $n$-bounded index.

It may also be possible that a more general polynomial inverse theorem
holds (see \cite{Breuillard:survey:2012}).
We pose the following question which bypasses abelian and finite
obstacles.

\begin{question}
  Let $S$ be a finite symmetric subset of a group $G$ such that
  $|S^3| \le K|S|$ for some $K\ge1$. Is it true that S is contained in
  $K^{c(G)}$ cosets of some virtually soluble subgroup of $G$? 
\end{question}

The above question may be viewed as a counterpart of the Polynomial
Freiman-Ruzsa Conjecture which asserts that
(a variant of)
Freiman's famous Inverse
theorem holds with polynomial constants.
The existence of some huge bound $f(K)$ for the number of covering cosets,
as above, follows from the very general Inverse theorem
of Breuillard, Green and Tao
\cite{Breuillard-Green-Tao:structure-of-approximate-groups:2012}. 
See Breuillard's survey \cite{Breuillard:survey:2012}
for a detailed discussion of these issues.

\fref{thm:PSz-Polynomial-Inverse-thm} shows that the answer is
positive for the groups $G=SL(n,\BF)$. It would interesting to
investigate this question for various groups of intermediate word
growth, such as the Grigorchuk groups
(see e.g. \cite{de-la-Harpe:Topics-geometric-group-theory:2000}).

 It would be extremely interesting if the number of cosets required would
be bounded by $K^c$ for some absolute constant $c$ for all groups
$G$. Obtaining such a result for all linear groups $G$ (in which $c$ does
not depend on the dimension) already seems to require some essential
new ideas. 

\section{Applications of the Sarnak-Xue-Gowers trick}

A subset $X$ of a group $G$ is called product-free
if there are no solutions to $xy=z$ with $x,y,z\in X$.

Answering an 1985 question of Babai and S\'os
\cite{Babai-Sos:Sidon-Cayley:1985},
Gowers \cite{Gowers:Quasirandom-groups:2008}
showed that the group $G=PSL(2,p)$ has no product-free subset of size greater
than $c|G|^{\frac89}$ for some $c>0$.
The proof in \cite{Gowers:Quasirandom-groups:2008}
is closely related to an argument of Sarnak and Xue
\cite{Sarnak-Xue:Automorphis-repr-multiplicity:1991},
which also plays an important role in the proof of
\fref{thm:Bourgain-Gamburd-SL2-p}.

\begin{notation}
  For any group $G$ let $\degC(G)$ denote the minimum degree of a
  non-trivial complex representation.
\end{notation}

It was observed in
\cite{Nikolov-Pyber:Product-decomposition-quasirandom-Jordan:2011},
that as a byproduct of the results of Gowers
\cite{Gowers:Quasirandom-groups:2008}
one obtains the following extremely useful corollary.

\begin{cor} \label{cor:gowers-trick}
  Let $G$ be a group of order $n$ such that $\degC(G)=k$.
  If $A$ is a subset of $G$ such that
  $|A|>\frac{n}{k^{1/3}}$,
  then we have $A^3=G$.
\end{cor}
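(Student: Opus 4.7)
The plan is to reduce the claim to the quantitative ``mixing'' lemma of Gowers \cite{Gowers:Quasirandom-groups:2008}, which is the analytic heart of the paper on quasirandom groups. Recall that Gowers' lemma states the following: for any finite group $G$ with $\degC(G) = k$ and any three subsets $B, C, D \subseteq G$, the number of triples $(b,c,d) \in B \times C \times D$ satisfying $bc = d$ is at least
$$
\frac{|B||C||D|}{|G|} - \sqrt{\frac{|B||C||D|\cdot|G|}{k}}.
$$
In particular, whenever $|B||C||D| > |G|^3/k$ this count is strictly positive, so the equation $bc = d$ has a solution with $b \in B$, $c \in C$, $d \in D$. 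I will take this lemma as a black box; its proof goes through non-abelian Fourier analysis and uses the hypothesis on $\degC(G)$ precisely to control the contribution of non-trivial irreducible representations.

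To deduce the corollary, the key move is to reformulate the statement $A^3 = G$ as an assertion about solutions of $xy = z$ inside three sets of the same size as $A$. Fix an arbitrary $g \in G$; then $g \in A^3$ if and only if there exist $a_1, a_2, a_3 \in A$ with $a_1 a_2 = g a_3^{-1}$, i.e.\ if and only if the equation $bc = d$ admits a solution with $b \in A$, $c \in A$, and $d \in gA^{-1}$. I would therefore apply Gowers' lemma with $B = C = A$ and $D = gA^{-1}$, noting that $|D| = |A|$, so that $|B||C||D| = |A|^3$.

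The hypothesis $|A| > n/k^{1/3}$ rearranges to $|A|^3 > n^3/k = |G|^3/\degC(G)$, so the hypothesis of Gowers' lemma is satisfied, producing $a_1, a_2 \in A$ and $a_3 \in A$ with $a_1 a_2 = g a_3^{-1}$, hence $g = a_1 a_2 a_3 \in A^3$. Since $g$ was arbitrary, $A^3 = G$. The only real obstacle in this plan is the Gowers lemma itself; everything else is the ``Sarnak--Xue--Gowers trick'' of choosing the translate $gA^{-1}$ as the third set, which is the observation of \cite{Nikolov-Pyber:Product-decomposition-quasirandom-Jordan:2011} and which is what turns the counting estimate into a covering statement.
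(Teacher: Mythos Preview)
Your argument is correct and is precisely the standard deduction from Gowers' mixing lemma. Note, however, that the paper does not actually supply a proof of this corollary: it simply records the statement and attributes the observation to \cite{Nikolov-Pyber:Product-decomposition-quasirandom-Jordan:2011} as a byproduct of \cite{Gowers:Quasirandom-groups:2008}. Your write-up is exactly the argument one finds in those references, so there is nothing to compare --- you have reconstructed the intended proof.

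One cosmetic remark: you can shorten the deduction slightly by quoting the three-set consequence directly in the form ``if $|B|\,|C|\,|D| > n^3/k$ then $BCD = G$'' (which is how \cite{Nikolov-Pyber:Product-decomposition-quasirandom-Jordan:2011} phrases it, and which is \fref{lem:turbo-gowers-trick} with $t=3$), and then simply take $B=C=D=A$; the translation by $g$ is then absorbed into the proof of that lemma rather than repeated here. But this is purely a matter of packaging.
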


For groups of Lie type rather strong lower bounds on the minimal degree of a
complex representation are known
\cite{Landazuri-Seitz-Minimal-degree-proj-repr:1974}.
Combining these bounds with \fref{cor:gowers-trick}
we obtain e.g. the following.

\begin{prop} \label{prop:PSLn-gowers-trick}
  Let $A$ be a subset of $G=PSL(n,q)$
  of size at least $2|G|/q^{\frac{n-1}{3}}$.
  Then we have $A^3=G$.
\end{prop}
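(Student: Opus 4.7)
The plan is to apply \fref{cor:gowers-trick} after inserting the standard Landazuri--Seitz lower bound on $\degC\bigl(PSL(n,q)\bigr)$. All three steps are essentially mechanical.

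First I would cite the Landazuri--Seitz estimate from \cite{Landazuri-Seitz-Minimal-degree-proj-repr:1974} in the form needed here, namely $\degC\bigl(PSL(n,q)\bigr) > q^{n-1}/8$. For $n\ge 3$ this is implied by the elementary bound $\frac{q^n-q}{q-1} = q^{n-1}+q^{n-2}+\cdots+q$ supplied by the deleted permutation representation of $PSL(n,q)$ on $\BP^{n-1}(\Fq)$; for $n=2$ the Landazuri--Seitz tables give $\degC\bigl(PSL(2,q)\bigr) \ge (q-1)/\gcd(2,q-1)$, which is easily seen to exceed $q/8$ for every prime power $q\ge 2$.

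Setting $k:=\degC(G)$, the inequality $k>q^{n-1}/8$ translates directly into $k^{1/3} > q^{(n-1)/3}/2$, so the hypothesis $|A|\ge 2|G|/q^{(n-1)/3}$ yields $|A|>|G|/k^{1/3}$. \fref{cor:gowers-trick} then produces $A^3=G$, as required.

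The main (and only) obstacle is checking the short list of small exceptional pairs $(n,q)$, such as $(2,9)$, $(3,2)$, $(3,4)$, $(4,2)$, $(4,3)$, for which the generic Landazuri--Seitz bound is weaker than the clean form stated above. In each of these the situation can be settled by direct inspection: either the true value of $\degC\bigl(PSL(n,q)\bigr)$ still exceeds $q^{n-1}/8$ with room to spare (so the argument goes through unchanged), or else the hypothesis $|A|\ge 2|G|/q^{(n-1)/3}$ forces $|A|=|G|$ and the conclusion is immediate. The factor of $2$ in the statement is precisely what absorbs these boundary cases.
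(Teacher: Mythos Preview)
Your approach is exactly the one the paper intends: insert the Landazuri--Seitz lower bound for $\degC\big(PSL(n,q)\big)$ into \fref{cor:gowers-trick} and check the arithmetic, which you do correctly.

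One correction, though it does not affect the overall argument since you cite Landazuri--Seitz anyway: the deleted permutation representation on $\BP^{n-1}(\Fq)$ only furnishes an \emph{upper} bound for $\degC$, not a lower one. Exhibiting a representation of dimension $(q^n-q)/(q-1)$ does not by itself rule out smaller nontrivial representations; that is precisely the content of the Landazuri--Seitz theorem, and it is not elementary. So the sentence ``for $n\ge3$ this is implied by the elementary bound \dots\ supplied by the deleted permutation representation'' should be dropped or reworded to say that Landazuri--Seitz shows this representation realises the minimum.
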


This of course implies the Product theorem for very large subsets.

As another interesting application,
in \cite{Nikolov-Pyber:Product-decomposition-quasirandom-Jordan:2011}
the following ``Waring-type'' theorem is proved.

\begin{thm} \label{thm:waring}
  Let $w$ be a nontrivial group word.
  Let $L$ be a finite simple group of Lie type of rank $r$
  over the field $\Fq$.
  Let $W$ be any subset of $w(L)$ such that
  $|W|\ge\big|w(L)\big|\big/q^{r/13}$.
  Then there exists a positive integer $N$
  depending only on $w$ such that if $|L|>N$ then
  $$
  W^3=L \;.
  $$
\end{thm}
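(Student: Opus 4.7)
The plan is to apply \fref{cor:gowers-trick} directly with $A = W$: once the hypothesis $|W| > |L|/\degC(L)^{1/3}$ is verified, the corollary yields $W^3 = L$ at once. So the entire task reduces to bounding $\degC(L)$ from below and $|w(L)|$ from below.

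For the first bound I would invoke the Landazuri--Seitz theorem, which, up to finitely many small exceptions, gives $\degC(L) \ge c_1 q^r$ for some absolute constant $c_1 > 0$ whenever $L$ is a finite simple group of Lie type of rank $r$ over $\Fq$. Hence
$$
\frac{|L|}{\degC(L)^{1/3}} \;\le\; c_1^{-1/3}\,\frac{|L|}{q^{r/3}}.
$$
For the second bound I would appeal to a Borel-type near-surjectivity theorem for word maps: for every nontrivial word $w$ there should exist $N_0 = N_0(w)$ and $\alpha = \alpha(w) > 0$ such that $|w(L)| \ge \alpha|L|$ for every finite simple group of Lie type $L$ with $|L| > N_0$. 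For bounded rank this follows from Borel's dominance theorem for word maps on semisimple algebraic groups combined with the Lang--Weil estimates; for unbounded rank one needs a Larsen--Shalev style input on word values in simple groups.

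Putting the two ingredients together, the hypothesis $|W| \ge |w(L)|/q^{r/13}$ yields $|W| \ge \alpha|L|/q^{r/13}$, so the desired inequality $|W| > c_1^{-1/3}|L|\, q^{-r/3}$ reduces to
$$
\alpha\, c_1^{1/3}\, q^{10r/39} \;>\; 1.
$$
Since $|L| \to \infty$ forces $q^r \to \infty$ among finite simple groups of Lie type, this is automatic once $|L|$ exceeds some $N = N(w) \ge N_0(w)$. The exponent $1/13$ is chosen comfortably below the critical value $1/3$ coming from the cube root in \fref{cor:gowers-trick}, leaving plenty of slack to absorb both $\alpha$ and $c_1$; any fixed positive exponent strictly smaller than $1/3$ would work equally well.

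The chief obstacle, in my view, is establishing the uniform word-map lower bound $|w(L)| \ge \alpha(w)|L|$ valid simultaneously for \emph{all} ranks $r$. For fixed or bounded rank this is routine via Borel plus Lang--Weil, but handling large-rank Lie type groups uniformly requires a more substantial input. Once this ingredient is in hand, everything else is clean bookkeeping with Landazuri--Seitz and the Gowers-type corollary.
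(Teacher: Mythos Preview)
Your proposal is correct and matches the paper's own approach: the paper explicitly says the proof reduces to combining \fref{cor:gowers-trick} with the Landazuri--Seitz lower bounds on $\degC(L)$ and the fact that $|w(L)|$ is a positive proportion of $|L|$ for large simple groups of Lie type. Your identification of the uniform lower bound $|w(L)|\ge\alpha(w)|L|$ (a Larsen--Shalev type input) as the one substantive ingredient is exactly right; the slack between the hypothesis exponent $r/13$ and the Gowers threshold $r/3$ absorbs all constants, including the fact that Landazuri--Seitz sometimes gives $\degC(L)\gtrsim q^{r-O(1)}$ rather than literally $c_1q^r$.
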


Earlier, as the main result of a difficult paper
Shalev \cite{Shalev:Word-maps-noncommutative-Waring:2009}
has obtained the same result in the case $W=w(L)$
(allowing $L$  to be also an alternating group).

An advantage of the above sparse version is that
one can impose further restrictions on $W$.
For example one can require that no two elements of $W$
are inverses each other, or images of each other under
Frobenius automorphisms.
It would be interesting to obtain a sparse ``Waring-type''
theorem for the alternating groups.
 
The proof of \fref{thm:waring}
is relatively short compared to the one in
\cite{Shalev:Word-maps-noncommutative-Waring:2009}.
One only has to show that
for simple groups of Lie type
the sets $w(L)$ are large enough.

In a major recent work Larsen, Shalev and Tiep
\cite{Larsen-Shalev-Tiep:Waring-simple-groups:2011}
have shown that in fact,
for a nontrivial word $w$,
$w(L)^2=L$ holds for large enough finite simple groups.

In certain situations, in which \fref{cor:gowers-trick}
cannot be applied directly,
the following extension due to Babai, Nikolov and Pyber
\cite{Babai-Nikolov-Pyber:Product-growth-Mixing:2008}
has turned out to be quite useful.

\begin{lem}\label{lem:turbo-gowers-trick}
  Let $G$ be a group of order $n$ such that $\degC(G)=k$.
  Let $A_1,\dots,A_t$ ($t\ge3$)
  be nonempty subsets of $G$.
  If 
  $$
  \prod_{i=1}^t\big|A_i\big|\ge
  \frac{n^t}{k^{t-2}}
  $$
  then
  $
  \prod_{i=1}^tA_i = G
  $.
\end{lem}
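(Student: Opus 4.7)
The plan is to prove the lemma by non-abelian Fourier analysis on $G$, iterating the two-set mixing inequality that underlies \fref{cor:gowers-trick}. Write $f_i = \mathbf{1}_{A_i}$, $c_j = \prod_{i\le j}|A_i|/n$, and define the ``error'' functions
\[
  g_j \;:=\; f_1*\cdots*f_j - c_j\,\mathbf{1}_G
\]
on $G$. A short calculation using $\mathbf{1}_G * f_{j+1} = |A_{j+1}|\,\mathbf{1}_G$ shows that the constant parts telescope, giving $g_{j+1} = g_j * f_{j+1}$ for every $j$. Since the number of tuples $(a_1,\dots,a_t)\in A_1\times\cdots\times A_t$ with $a_1\cdots a_t=g$ is exactly $N(g):=(f_1*\cdots*f_t)(g) = c_t + g_t(g)$, the lemma reduces to proving $|g_t(g)|\le c_t$ for every $g\in G$.

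The key Fourier estimate is that for every non-trivial irrep $\rho$ of $G$, Parseval's identity $\sum_\sigma d_\sigma\|\hat f_i(\sigma)\|_{\mathrm{HS}}^2 = n|A_i|$ forces
\[
  \|\hat f_i(\rho)\|_{\mathrm{op}}^2 \;\le\; \|\hat f_i(\rho)\|_{\mathrm{HS}}^2 \;\le\; \frac{n|A_i|}{d_\rho} \;\le\; \frac{n|A_i|}{k}.
\]
Combined with submultiplicativity $\|AB\|_{\mathrm{HS}}\le\|A\|_{\mathrm{HS}}\|B\|_{\mathrm{op}}$ and the vanishing $\widehat{g_{j-1}}(\mathbf{1})=0$, this yields, for $j\ge 3$,
\[
  \|g_j\|_2^2 \;=\; \frac{1}{n}\sum_{\rho\ne 1} d_\rho \,\bigl\|\widehat{g_{j-1}}(\rho)\,\hat f_j(\rho)\bigr\|_{\mathrm{HS}}^2 \;\le\; \frac{n|A_j|}{k}\,\|g_{j-1}\|_2^2 .
\]
The same calculation applied to $g_2 = f_1*f_2 - c_2\mathbf{1}_G$ (whose Fourier transform also vanishes at the trivial representation) gives the base case $\|g_2\|_2^2 \le n|A_1||A_2|/k$, and a routine induction produces
\[
  \|g_{t-1}\|_2^2 \;\le\; \frac{n^{t-2}}{k^{t-2}}\prod_{i=1}^{t-1}|A_i|.
\]

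Finally, Cauchy-Schwarz applied to the convolution $g_t = g_{t-1}*f_t$ gives
\[
  |g_t(g)| \;\le\; \|g_{t-1}\|_2\,\|f_t\|_2 \;\le\; \sqrt{\tfrac{n^{t-2}}{k^{t-2}}\prod_{i=1}^t|A_i|} \;=\; c_t\,\sqrt{\tfrac{n^t}{k^{t-2}\prod_i|A_i|}} ,
\]
which is at most $c_t$ precisely when $\prod|A_i|\ge n^t/k^{t-2}$, the hypothesis. Hence $N(g)\ge 0$, and since $N(g)$ is a non-negative integer it is at least $1$ (equality in the boundary case $\prod|A_i|=n^t/k^{t-2}$ forces equality throughout Cauchy-Schwarz, which is easily ruled out or handled by a trivial perturbation). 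Therefore $\prod A_i = G$. The main delicacy of the proof is getting the operator-norm versus Hilbert-Schmidt-norm split correct in the inductive step; everything else is routine Fourier bookkeeping.
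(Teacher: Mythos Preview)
The paper does not actually prove \fref{lem:turbo-gowers-trick}; it is quoted from Babai--Nikolov--Pyber, whose argument is essentially the representation-theoretic/$\ell^2$-mixing argument you give. So your approach matches the intended one, and the Fourier bookkeeping (the recursion $g_{j+1}=g_j*f_{j+1}$, the operator-norm bound $\|\hat f_i(\rho)\|_{\mathrm{op}}^2\le n|A_i|/k$, the induction on $\|g_j\|_2^2$, and the final Cauchy--Schwarz) is all correct.

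There is one genuine sloppiness in your last paragraph. The sentence ``Hence $N(g)\ge 0$, and since $N(g)$ is a non-negative integer it is at least $1$'' is a non-sequitur: $N(g)\ge 0$ together with integrality gives only $N(g)\ge 0$. Your parenthetical about ruling out equality in Cauchy--Schwarz is the right instinct but is not the cleanest fix, and ``trivial perturbation'' is not available since the $A_i$ are fixed. The clean way to get the \emph{strict} inequality $|g_t(g)|<c_t$ is already built into your base case: in bounding $\|g_2\|_2^2$ you replaced
\[
\frac{1}{n}\sum_{\rho\ne\mathbf 1} d_\rho\,\|\hat f_2(\rho)\|_{\mathrm{HS}}^2
\;=\;|A_2|-\frac{|A_2|^2}{n}
\]
by $|A_2|$, and since $A_2$ is nonempty this drop is strictly positive. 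Hence $\|g_2\|_2^2<\tfrac{n}{k}|A_1||A_2|$ strictly, the strictness propagates through the induction to $\|g_{t-1}\|_2^2$, and therefore $|g_t(g)|<c_t$ under the hypothesis $\prod_i|A_i|\ge n^t/k^{t-2}$. This gives $N(g)>0$ directly, with no separate boundary-case analysis needed. Once you insert that one observation, the proof is complete.
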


A major result of Nikolov and Segal
\cite{Nikolov-Segal:profinite-I:2007}
\cite{Nikolov-Segal:profinite-II:2007} 
states that finite index subgroups of finitely generated profinite groups are
open, as conjectured by Serre.
Improving  their methods, Nikolov and Segal
\cite{Nikolov-Segal:powers-in-finite-groups:2011}
later proved the following theorem,
which also implies Serre's conjecture:

\begin{thm} \label{thm:Power-width}
  Let $d,q\in\BN$ and
  $G$ a finite $d$-generated group.
  Let $G^q$ be the subgroup of $G$ generated by the $q$-th powers.
  There exists a function $f(d,q)$
  such that
  every element of $G^q$ is a product of at most $f(d,q)$  powers $x^q$.
\end{thm}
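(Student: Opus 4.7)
The plan is to imitate the strategy of the other applications in this section: reduce the statement to how a large normal subset covers a finite simple group, and invoke \fref{lem:turbo-gowers-trick}. First one strips off the abelianisation: since $G$ is $d$-generated, $G/[G,G]$ is a $d$-generated abelian group, and the image of $G^q$ in it is generated by the $q$-th powers of the $d$ generators, so any element there is a product of at most $d$ such powers. After handling this abelian piece, one reduces via a chief series $1=G_0\triangleleft G_1\triangleleft\dots\triangleleft G_m=G$ to showing that, at each chief factor $V_i=G_i/G_{i-1}$, the image of the power set $W=\{x^q:x\in G\}$ covers the image of $G^q$ by products of boundedly many copies, with the bound and the total chief length controlled in terms of $d$ and $q$.

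For a nonabelian chief factor $V=T^k$ the image $W_V$ is a normal subset of $V$ consisting of $q$-th powers. The key input is a uniform lower bound, depending only on $q$, of the form $\big|\{t^q:t\in T\}\big|\ge c(q)|T|$ on the density of $q$-th powers in a finite simple group $T$. Granted this, together with a bound on $k$ in terms of $d$ coming from the fact that $V$ is a $d$-generated $G$-module, the Landazuri-Seitz lower bound on $\degC(T)$ verifies the hypothesis of \fref{lem:turbo-gowers-trick} with $A_1=\dots=A_t=W_V$ for some $t=t(d,q)$, yielding $W_V^t=V$. The finitely many simple groups $T$ below the Landazuri-Seitz threshold have exponent dividing some integer depending on $q$ and are handled elementarily.

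For an elementary abelian chief factor $V=\Fp^r$ the situation splits: if $p\nmid q$ then $V^q=V$ and $W_V$ generates $V$ as an $\Fp$-vector space in at most $r$ summands, where $r$ is bounded in terms of $d$; if $p\mid q$ then the image of $G^q$ in $V$ is trivial and nothing has to be done at this factor. Combining these ingredients and climbing back through the chief series yields a bound $f(d,q)$ on the $W$-word length of every element of $G^q$.

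The main obstacle is the uniform density bound on $q$-th powers in finite simple groups. The Larsen-Shalev-Tiep theorem gives $\{t^q:t\in T\}^2=T$ for large $T$, but this does not immediately yield a density bound $\big|\{t^q:t\in T\}\big|\ge c(q)|T|$ uniform over \emph{all} finite simple groups $T$ — in particular over groups of Lie type of unbounded rank or field size, and over alternating groups $A_n$. Establishing such a uniform lower bound, with $c(q)>0$ depending only on $q$, is the crux; the remainder is a standard chief-series reduction combined with \fref{lem:turbo-gowers-trick}. An alternative route, closer to Nikolov and Segal's original argument, replaces this density-and-Gowers step by a direct structural analysis of "twisted commutator" words inside each quasisimple chief factor, but the global skeleton of the proof — peel off the abelianisation, run through a chief series, treat each factor — is the same in either approach.
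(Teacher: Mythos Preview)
The paper does not actually prove \fref{thm:Power-width}; it is quoted as a result of Nikolov and Segal, with only the remark that \fref{lem:turbo-gowers-trick} is a key tool in their later, shorter proofs. So there is no proof in the paper to compare against, and your sketch must be judged on its own.

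Your outline has a structural gap that is more serious than the density issue you flag. You assert that ``the total chief length'' and, for a nonabelian chief factor $V\cong T^k$, the number $k$ of simple factors are ``controlled in terms of $d$ and $q$''. Neither is true. A $2$-generated finite group can have arbitrarily long chief series (already among $p$-groups), and it can have nonabelian chief factors $T^k$ with $k$ arbitrarily large: for instance $A_5\wr S_n$ is boundedly generated but has $A_5^{\,n}$ as a chief factor. Likewise, for an abelian chief factor $V\cong\Fp^{\,r}$ the dimension $r$ is not bounded in terms of $d$. Hence a naive ``climb the chief series and multiply the bounds'' argument cannot produce a uniform $f(d,q)$.

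This also breaks your application of \fref{lem:turbo-gowers-trick} at a nonabelian factor. For $V=T^k$ one has $\degC(V)=\degC(T)$ (a nontrivial representation can live on a single factor), while your density bound gives only $|W_V|\ge c(q)^k|T|^k$; feeding this into the lemma requires $c(q)^{kt}\ge \degC(T)^{-(t-2)}$, which fails for large $k$ once $c(q)<1$. The actual Nikolov--Segal argument does not bound the chief length; it reduces to a single carefully chosen quotient and proves a covering statement for products of conjugates of a \emph{fixed} generating set (this is where twisted commutators enter), with \fref{lem:turbo-gowers-trick} applied inside quasisimple pieces rather than across an unbounded direct power. Your closing paragraph gestures at this alternative; that, not the chief-series induction, is the skeleton that actually works.
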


Recently Nikolov and Segal
\cite{Nikolov-Segal:generators-commutators-in-finite-groups:2012}
obtained much shorter proofs as well as vast generalisations of their results.
A key tool in obtaining these is \fref{lem:turbo-gowers-trick}.

In \cite{Babai-Nikolov-Pyber:Product-growth-Mixing:2008}
\fref{lem:turbo-gowers-trick}
is used to derive the following improvement of a result from
\cite{Liebeck-Pyber:Finite-linear-groups-bounded-generation:2001}.

\begin{thm} \label{thm:5-sylow}
  Let $L$ be a finite simple group of Lie type in characteristic $p$.
  Then $L$ is a product of 5 Sylow $p$-subgroups.
\end{thm}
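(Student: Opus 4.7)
The plan is to combine classical structure theory of finite groups of Lie type with the turbo-Gowers trick, \fref{lem:turbo-gowers-trick}. The earlier Liebeck--Pyber theorem \cite{Liebeck-Pyber:Finite-linear-groups-bounded-generation:2001} already expresses $L$ as a product of some (rank-dependent) number $C$ of Sylow $p$-subgroups; the goal is to push $C$ down to the absolute constant $5$.

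First, I would fix a Sylow $p$-subgroup $P$ of $L$, realised as the unipotent radical $U$ of a Borel subgroup $B = UT$, together with the opposite Sylow $p$-subgroup $U^-$; since $U \cap U^- = \{1\}$ one has $|UU^-| = |U|^2$ already at the trivial level. Using the Bruhat decomposition $L = \bigsqcup_{w \in W} UTwU$ and conjugation by representatives of the Weyl group, I would produce four conjugate Sylow $p$-subgroups $P_1, P_2, P_3, P_4$ such that the product $A := P_1 P_2 P_3 P_4$ occupies a quantitatively large portion of $L$. Geometrically the lower bound on $|A|$ traces back to the big cell $U^- T U$, which fills a substantial fraction of $L$.

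Second, I would invoke the Landazuri--Seitz lower bound on $\degC(L)$ to verify the quantitative hypothesis of \fref{lem:turbo-gowers-trick}. Grouping the five target Sylow $p$-subgroups into three subsets $A_1 := P_1 P_2$, $A_2 := P_3 P_4$, $A_3 := P_5$, the lemma with $t = 3$ yields $A_1 A_2 A_3 = L$ as soon as the inequality
\[
|A_1| \cdot |A_2| \cdot |A_3| \;\ge\; \frac{|L|^3}{\degC(L)}
\]
is established, and this realises $L = P_1 P_2 P_3 P_4 P_5$ as required.

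The main obstacle is the structural step: securing lower bounds on products of a small number of conjugate Sylow $p$-subgroups that, together with the Landazuri--Seitz bound on $\degC(L)$, satisfy the turbo-Gowers hypothesis uniformly in the rank of $L$. For groups of bounded rank this is essentially immediate from the big-cell estimate; for groups of large rank, where $|U|$ is much bigger than $\degC(L)^{1/\mathrm{rank}}$, one has to track the intersections of several conjugates of $U$ and exploit the root-subgroup structure of $L$ to force the cumulative product size high enough. This is where the bulk of the technical work in \cite{Babai-Nikolov-Pyber:Product-growth-Mixing:2008} lies.
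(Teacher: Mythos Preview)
Your proposal misses the key combinatorial trick that makes the turbo-Gowers argument land on exactly five Sylow subgroups. With your grouping $A_1=P_1P_2$, $A_2=P_3P_4$, $A_3=P_5$ and $t=3$, the best you can hope for is $|A_1|\,|A_2|\,|A_3|\le|U|^5$, so the hypothesis of \fref{lem:turbo-gowers-trick} becomes $\degC(L)\ge|L|^3/|U|^5$. This fails already for the illustrative case $L={}^2F_4(q)$ in the paper: there $|U|=q^{12}$, $|L|\approx q^{26}$, hence $|L|^3/|U|^5\approx q^{18}$, while $\degC(L)$ is only of order $q^{9/2}$. For $PSL_n(q)$ the required bound is $\degC(L)\gtrsim q^{n^2/2+5n/2-3}$, which is hopeless for every $n\ge2$. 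No amount of root-subgroup bookkeeping can help, since $|P_iP_j|\le|U|^2$ is a hard ceiling.

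The paper's approach is to take only \emph{two} Sylow $p$-subgroups $U,V$ with $U\cap V=1$, apply \fref{lem:turbo-gowers-trick} with $t=4$ to the four sets $A_1=UV$, $A_2=VU$, $A_3=UV$, $A_4=VU$, and then observe that
\[
(UV)(VU)(UV)(VU)=U\,(VV)\,(UU)\,(VV)\,U=UVUVU,
\]
so the eight factors collapse to five because adjacent copies of the same subgroup absorb one another. The required inequality is now $|UV|^4\ge|L|^4/\degC(L)^2$, i.e.\ $|L|/|U|^2<\degC(L)^{1/2}$, which is exactly what is verified from the Landazuri--Seitz and Kleidman--Liebeck tables for ${}^2F_4(q)$. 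Comparing: the paper needs $\degC(L)\ge(|L|/|U|^2)^2$, whereas your scheme needs $\degC(L)\ge(|L|/|U|^2)^3\cdot|U|$, stronger by a factor of $|L|/|U|$. The alternating pattern and the resulting collapse is the idea you are missing.
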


As an illustration of the proof consider the Ree groups
$L={}^2F_4(q)$
(a difficult case in
\cite{Liebeck-Pyber:Finite-linear-groups-bounded-generation:2001}). 
Take a pair of Sylow $p$-subgroups $U$ and $V$ in $L$
such that
$U\cap V=\{1\}$.
Using the tables in
\cite{Kleidman-Liebeck:subgroup-structure-classical:1990}
it follows, that we have
$$
\frac{|L|}{|UV|} <
\degC(L)^{\frac12} \;.
$$
Hence \fref{lem:turbo-gowers-trick}
with $t=4$ immediately gives that 
$$
L=(UV)(VU)(UV)(VU) = UVUVU \;.
$$

Combining \fref{thm:5-sylow}
with an argument from
\cite{Liebeck-Pyber:Finite-linear-groups-bounded-generation:2001}
one obtains a similar improvement of another result in
\cite{Liebeck-Pyber:Finite-linear-groups-bounded-generation:2001}.

\begin{thm}
  Let $\BF$ be a field of characteristic $p$
  and $G$ a finite subgroup of $GL(n,\BF)$
  generated by its Sylow $p$-subgroups.
  If $p$ is sufficiently large compared to $n$
  then $G$ is a product of 5 of its Sylow $p$-subgroups.
\end{thm}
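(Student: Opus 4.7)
The plan is to combine \fref{thm:5-sylow} with a structural reduction: for $p$ large compared to $n$, a finite subgroup of $GL(n,\BF)$ has a very constrained ``Lie type in characteristic $p$'' structure modulo its $p'$-core, so that the simple group result can be applied to its composition factors and then reassembled.

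First, set $N=O_{p'}(G)$, the largest normal $p'$-subgroup, and pass to $\bar G=G/N$. Since $G$ is generated by its Sylow $p$-subgroups, so is $\bar G$, and $O_{p'}(\bar G)=1$ by construction. For $p$ sufficiently large compared to $n$, the Weisfeiler--Larsen--Pink type structure theorem for finite subgroups of $GL(n,\BF)$ implies that the generalised Fitting subgroup $F^*(\bar G)$ is a central product of quasi-simple groups of Lie type in characteristic $p$; combined with $O_{p'}(\bar G)=1$ and the fact that $\bar G$ is generated by Sylow $p$-subgroups, this forces $\bar G/F^*(\bar G)$ to be a $p$-group (any $p'$-composition factor in the ``outer'' part would either contradict trivial $p'$-core or be incompatible with Sylow $p$-generation). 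Applying \fref{thm:5-sylow} to each non-abelian simple section of $F^*(\bar G)$ and combining the pieces with the $p$-group quotient on top yields an expression of $\bar G$ as a product of $5$ of its Sylow $p$-subgroups $\bar P_1,\ldots,\bar P_5$.

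The main obstacle is to lift this decomposition from $\bar G$ back to $G$ without allowing the number of Sylow factors to grow. Choosing Sylow lifts $P_i$ of $\bar P_i$ initially gives only $G=N\cdot P_1P_2P_3P_4P_5$, so the normal $p'$-subgroup $N$ must be absorbed into the five factors. Here I would follow the coprime-action argument of~\cite{Liebeck-Pyber:Finite-linear-groups-bounded-generation:2001}: for each $i$ the decomposition $N=[N,P_i]\cdot C_N(P_i)$ (valid because $\gcd(|N|,|P_i|)=1$) together with the fact that the $P_i$ jointly generate $G$ allows one to rewrite any element of $N$ as a product of commutators $[n,p]$ with $p\in P_i$, each of which can be slid into the corresponding $P_i$-factor; the residual fixed-point part $\bigcap_i C_N(P_i)$ lies in $Z(G)\cap N$ and is handled separately. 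Making this bookkeeping go through cleanly while preserving the bound of $5$ Sylow factors is the technically delicate step, rather than the appeal to \fref{thm:5-sylow} itself.
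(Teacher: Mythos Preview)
Your overall strategy---reduce via Weisfeiler's structure theorem, apply \fref{thm:5-sylow} to the Lie-type pieces, then reassemble using the argument from \cite{Liebeck-Pyber:Finite-linear-groups-bounded-generation:2001}---matches the paper's one-line indication. However, your implementation of the reduction uses the wrong quotient, and this is exactly what creates the ``technically delicate'' lifting step you flag but do not actually carry out.

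The reduction should go through $O_p(G)$, not $O_{p'}(G)$. Weisfeiler's theorem (stated in the paper as \fref{prop:Weisfeiler}) applied to $G$ itself gives a normal subgroup $H$ of $n$-bounded index with $H/O_p(G)$ a central product of an abelian $p'$-group and quasi-simple groups in $Lie^*(p)$. For $p$ larger than this index, every Sylow $p$-subgroup of $G$ lies in $H$, so $G=H$; and since $G$ is Sylow-$p$-generated the abelian $p'$ factor disappears, leaving $G/O_p(G)\in Lie^*(p)$. One then writes $G/O_p(G)$ as a product of five Sylow $p$-subgroups using \fref{thm:5-sylow} componentwise. The lift is now \emph{free}: every Sylow $p$-subgroup $P_i$ of $G$ contains the normal $p$-subgroup $O_p(G)$, so if $\bar P_1\cdots\bar P_5=G/O_p(G)$ then immediately $P_1\cdots P_5=G$. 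This is the shape of the Liebeck--Pyber argument the paper invokes.

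By contrast, with your choice $N=O_{p'}(G)$ the Sylow lifts $P_i$ do \emph{not} contain $N$, and your commutator-absorption sketch does not do what you claim: the commutators $[n,p]$ lie in $N$, not in $P_i$, so they cannot be ``slid into'' the Sylow factors. (One can check that in the present situation $O_{p'}(G)$ is necessarily central, so your $N$ is a central $p'$-group; but even then, passing from $P_1\cdots P_5\cdot N=G$ to $P_1\cdots P_5=G$ is genuinely nontrivial and not what you wrote.) A secondary gap: you apply Weisfeiler--Larsen--Pink to $\bar G=G/N$, but that quotient is not obviously a subgroup of $GL(n,\BF)$; the structure theorem should be applied to $G$ directly.
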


This is a generalisation of the following curious result of
Hrushovski and Pillay 
\cite{Hrushovski-Pillay:Definable-subgroups:1995},
originally obtained by model-theoretic tools.

\begin{thm} \label{thm:Hrushovski-Pillay}
  Let $p$ be a prime,
  and $G$ a subgroup of $GL(n,p)$ generated by elements of order $p$.
  Then $G$ can be written as a product $G=C_1C_2\dots C_k$
  where the $C_i$ are cyclic subgroups of order $p$
  and $k$ depends only on $n$.
\end{thm}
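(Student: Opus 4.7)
\emph{Proof plan.} The argument splits according to whether $p$ is large or small relative to $n$. Let $p_0(n)$ be a threshold such that the previous theorem applies whenever $p > p_0(n)$; we may assume $p_0(n) \ge n$.

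Suppose first that $p > p_0(n)$. For any nilpotent $n\times n$ matrix $N$ over $\Fp$ we have $N^n=0$, hence $(I+N)^p = I + N^p = I$ in characteristic $p$, so every non-identity unipotent element of $GL(n,p)$ has order exactly $p$. Consequently the hypothesis that $G$ is generated by its elements of order $p$ is equivalent to $G$ being generated by its Sylow $p$-subgroups, and the previous theorem yields a factorisation $G = P_1 P_2 P_3 P_4 P_5$ into Sylow $p$-subgroups of $G$. Each $P_i$ is a $p$-group of exponent $p$ and order at most $p^{\binom{n}{2}}$. Now any finite $p$-group $P$ of exponent $p$ and order $p^k$ is a set-theoretic product of $k$ cyclic subgroups of order $p$: choose a composition series $1 = Q_0 \normal Q_1 \normal \cdots \normal Q_k = P$ with $|Q_i/Q_{i-1}|=p$, pick $q_i \in Q_i\setminus Q_{i-1}$ (of order $p$ since $P$ has exponent $p$), and observe that $Q_i = \langle q_i\rangle Q_{i-1}$ as a set; iterating gives $P = \langle q_k\rangle\langle q_{k-1}\rangle\cdots\langle q_1\rangle$. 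Hence each $P_i$ is a product of at most $\binom{n}{2}$ cyclic subgroups of order $p$, and $G$ is a product of at most $5\binom{n}{2}$ such subgroups.

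For the remaining range $p \le p_0(n)$, the order $|G|$ is bounded by $|GL(n,p_0(n))|$, a function of $n$ alone. Since $G$ is generated by a symmetric set of elements of order $p$ (inversion preserves order $p$), the trivial diameter bound expresses every element of $G$ as a product of at most $|G|$ such generators; padding uniformly writes $G$ as a product of exactly $|G|$ cyclic subgroups of order $p$. Combining both cases, $k = \max\bigl(5\binom{n}{2},\,|GL(n,p_0(n))|\bigr)$ suffices and depends only on $n$.

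The main obstacle is bookkeeping rather than a new idea: one must confirm that the threshold $p_0(n)$ provided by the previous theorem is genuinely a function of $n$ alone, and recognise that the exponent-$p$ condition on Sylow subgroups (automatic once $p\ge n$) is precisely what allows a Sylow $p$-subgroup of $G$ to be decomposed into cyclic subgroups of order $p$ via a composition series, as opposed to merely cyclic subgroups of $p$-power order.
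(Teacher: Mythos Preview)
The paper does not actually give a proof of this statement: it is quoted as a result of Hrushovski and Pillay, and the paper merely remarks that the preceding theorem (a product of five Sylow $p$-subgroups for $p$ large relative to $n$) generalises it. Your derivation is precisely the natural way to extract the Hrushovski--Pillay statement from that preceding theorem, and the large-$p$ part is carried out correctly: the key observations that $p$-elements of $GL(n,p)$ are unipotent, that unipotents have order exactly $p$ once $p\ge n$, and that an exponent-$p$ group of order $p^k$ factors as a product of $k$ cyclic subgroups via a composition series, are all sound.

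There is one small slip in the small-$p$ case. Writing each $g\in G$ as a word of length at most $|G|$ in the generators and then ``padding uniformly'' does \emph{not} yield a factorisation $G=C_1\cdots C_{|G|}$ with the $C_i$ fixed in advance: different elements $g$ use different generator sequences, so the cyclic subgroup needed in position $i$ varies with $g$. The easy repair is to let $t_1,\dots,t_m$ be the elements of order $p$, set $C_1\cdots C_m=\langle t_1\rangle\cdots\langle t_m\rangle\supseteq\{1,t_1,\dots,t_m\}$, and then take $(C_1\cdots C_m)^d$ with $d$ the Cayley diameter; this gives $k\le m\cdot d\le|G|^2$, still bounded in terms of $n$. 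Alternatively, since for $p\le p_0(n)$ there are only finitely many groups $G$ under consideration, one may simply invoke finiteness. Either way the conclusion stands.
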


Returning to our main theme next we present a (previously unpublished)
``baby version'' of the Product theorem.
This result was in fact the starting point of the authors'
joint work concerning growth in groups.
It is proved by a ``greedy argument'' based on \fref{cor:gowers-trick}.
The proof of the Product theorem may be viewed as an analogous greedy argument
based on certain Larsen-Pink type inequalities
(see \fref{sec:proof-prod-theor}).

First we quote two combinatorial results which also play an important role
in the proof of the Product theorem.
As noted in \cite{Hefgott:SL-3-p:2011},
the first one is essentially due to Ruzsa
(see \cite{Ruzsa:noncommutative-Plunnecke:2010}
\cite{Ruzsa-Turjanyi:additive-bases-of-integers:1985}).

\begin{prop} [\cite{Hefgott:SL-3-p:2011}]
  \label{prop:Helfgott-3-is-enough}
  Let $1\in S$ be a symmetric finite subset of a group
  and $k\ge3$ an integer. Then
    $$
    \frac{\big|S^k\big|}{|S|}\le
    \left(\frac{\big|S^3\big|}{|S|}\right)^{k-2}
    $$
\end{prop}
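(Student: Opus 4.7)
The natural approach is induction on $k$ with $K := |S^3|/|S|$ as the tripling constant. The base case $k=3$ is literally the definition of $K$, so everything reduces to the inductive step: given $|S^k| \le K^{k-2}|S|$, I need $|S^{k+1}| \le K^{k-1}|S|$. This in turn is subsumed by the one-step growth estimate
$$|S^{k+1}| \le K\cdot |S^k|, \qquad k\ge 3,$$
which is what I would actually prove.

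The tool for this estimate is the Ruzsa triangle inequality in its non-commutative form (proved by a short counting argument/injection, and attributed to Ruzsa in the two references cited by the paper):
$$|U V^{-1}|\cdot|W| \;\le\; |U W^{-1}|\cdot|W V^{-1}|$$
for arbitrary finite subsets $U, V, W$ of any group. I would apply it with $U=S^2$, $W=S$, $V=S^{k-1}$. The hypothesis $S=S^{-1}$ forces every power $S^j$ to be symmetric, so $V^{-1}=S^{k-1}$, and the three products appearing on the right-hand side simplify exactly to the short powers
$$U V^{-1}=S^{k+1},\qquad U W^{-1}=S^3,\qquad W V^{-1}=S^k.$$
Substituting yields $|S^{k+1}|\cdot|S|\le|S^3|\cdot|S^k|$, i.e.\ $|S^{k+1}|\le K\cdot|S^k|$, and together with the inductive hypothesis this gives $|S^{k+1}|\le K^{k-1}|S|$, closing the induction.

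The only real content is the triangle inequality itself, which would be cited rather than reproved; once it is available, the symmetry of $S$ is precisely what makes the three factors on the right collapse to $S^3$, $S^k$ and $S^{k+1}$, and the proposition drops out with no further work. There is no genuine obstacle here, which is why this statement serves as a convenient black box for later applications (notably the Product theorem, where one repeatedly needs to pass between $S^k$ and $S^3$).
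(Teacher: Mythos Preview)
Your argument is correct: the Ruzsa triangle inequality with $U=S^{2}$, $W=S$, $V=S^{k-1}$ and the symmetry $S=S^{-1}$ gives $|S^{k+1}|\,|S|\le|S^{3}|\,|S^{k}|$, and induction from $k=3$ finishes the job. The paper does not supply its own proof of this proposition---it merely cites Helfgott and attributes the result to Ruzsa---so your approach via the Ruzsa triangle inequality is exactly the standard argument those citations point to.
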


That is, if a small power of $S$ is much larger than $S$ itself,
then $S^3$ is already much larger than $S$.

The next lemma is a slight extension of result
from\cite{Hefgott:SL-3-p:2011}.

\begin{lem} [\cite{PyberSzabo:2010:growth}]
  \label{lem:induce-from-growing-subgroup}
  Let $1\in S$ be a symmetric finite subset of a group $G$,
  and $H$ a subgroup of $G$.
  Then for all integers $k>0$ one has
  $$
  \Frac{\big|S^k\cap H\big|}{\big|S^2\cap H\big|} \le
  \Frac{\big|S^{k+1}\big|}{\big|S\big|} \;.
  $$
\end{lem}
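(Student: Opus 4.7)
The plan is a double-counting argument applied to the multiplication map
$$
\phi\colon (S^k\cap H)\times S\longrightarrow S^{k+1},\qquad (h,s)\mapsto hs.
$$
The image of $\phi$ is contained in $(S^k\cap H)\cdot S\subseteq S^{k+1}$, so $|\operatorname{image}(\phi)|\le|S^{k+1}|$. The domain has cardinality $|S^k\cap H|\cdot|S|$, so the entire lemma reduces to showing that every fibre of $\phi$ has size at most $|S^2\cap H|$.

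Next I would establish that fibre bound. Suppose $(h_1,s_1)$ and $(h_2,s_2)$ are two preimages of the same $x\in S^{k+1}$. From $h_1s_1=h_2s_2$ I get
$$
h_1^{-1}h_2=s_1s_2^{-1}.
$$
The left-hand side lies in $H$ since $h_1,h_2\in H$, while the right-hand side lies in $S\cdot S^{-1}=S^2$ because $S$ is symmetric; together these say $h_1^{-1}h_2\in S^2\cap H$. Fixing a distinguished preimage $(h_1,s_1)$ of $x$, the element $h_2$ is thus forced to lie in the coset $h_1(S^2\cap H)$, and once $h_2$ is chosen the companion $s_2=h_2^{-1}x$ is determined. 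Hence $|\phi^{-1}(x)|\le|S^2\cap H|$.

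Putting the two observations together yields
$$
|S^k\cap H|\cdot|S|\;\le\;|S^{k+1}|\cdot|S^2\cap H|,
$$
which, after dividing by $|S|\cdot|S^2\cap H|$, is exactly the claim. I do not anticipate any real obstacle: the whole argument hinges on choosing the right map and invoking symmetry of $S$ to put $s_1s_2^{-1}$ into $S^2$. The hypothesis $1\in S$ is not needed for the main inequality but it guarantees $S\cap H\subseteq S^2\cap H$ and $S\subseteq S^{k+1}$, which keeps the edge cases $k=1,2$ consistent with the statement.
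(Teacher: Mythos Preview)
Your argument is correct and is precisely the standard double-counting proof of this inequality. Note that the present paper does not actually supply a proof of this lemma; it merely quotes the statement from \cite{PyberSzabo:2010:growth} (and mentions that it is a slight extension of a result from \cite{Hefgott:SL-3-p:2011}). The proof given there is essentially the one you wrote: one considers the product map $(S^k\cap H)\times S\to S^{k+1}$ and bounds the fibres by $|S^2\cap H|$ via the identity $h_1^{-1}h_2=s_1s_2^{-1}\in H\cap S^2$. One minor remark on your closing comment: even without the hypothesis $1\in S$, the symmetry of $S$ already forces $1=ss^{-1}\in S^2\cap H$ for any $s\in S$, so the denominator $|S^2\cap H|$ is automatically positive.
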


That is, growth in any subgroup $H$ implies growth in $G$ itself.

Here is the simplest version of our ``Baby Product theorem''.

\begin{thm}\label{thm:Baby-Product-thm}
  Let $S$ be a symmetric generating set of the group $G=SL(n,q)$
  (where $q\ge 4$ is a prime power) 
  and $H$ a subgroup of $G$ isomorphic to $SL(2,q)$.
  If $S$ contains $H$ then
  either $S^3=G$
  or
  $$
  \big|S^3\big|>|S|\cdot q^{\frac1{1000}} \;.
  $$
\end{thm}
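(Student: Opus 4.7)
The plan is to prove the contrapositive: assume $K := |S^3|/|S| \le q^{1/1000}$, and derive $S^3 = G$. We intend to combine three tools: \fref{cor:gowers-trick} applied both to $G = SL(n,q)$ (using the Landazuri--Seitz bound $\degC(G) \gg q^{n-1}$) and to $H \cong SL(2,q)$ (using $\degC(H) \gg q$); \fref{prop:Helfgott-3-is-enough}, which gives $|S^k|/|S| \le K^{k-2} \le q^{(k-2)/1000}$ for all $k \ge 3$; and \fref{lem:induce-from-growing-subgroup}, which transfers growth inside $H$ back to growth of $S$.

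First, since the interesting case is $S^3 \ne G$, \fref{cor:gowers-trick} already forces $|S| \le |G|/\degC(G)^{1/3}$; otherwise we are done. Next, we would use $H \subseteq S$ to obtain $HSH \subseteq S^3$, and decompose $HSH$ as a disjoint union of double cosets $HgH$. By Dickson's classification of subgroups of $SL(2,q)$ (clean for $q \ge 4$), every proper subgroup of $H$ has order at most $q(q-1)$, so whenever $g \notin N_G(H)$ we have $|HgH| = |H|^2/|H \cap g^{-1}Hg| \ge (q+1)|H|$.

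The main dichotomy we foresee concerns how $S$ distributes among cosets of $H$. Either $S$ spreads across at least $q^{1/1000}|S|/|H|$ right cosets of $H$---in which case $|HS| \ge q^{1/1000}|S|$ and we are done since $HS \subseteq S^2 \subseteq S^3$---or, by pigeonhole, some coset $Hg_0$ with $g_0 \in S$ satisfies $|Hg_0 \cap S| > |H|/q^{1/1000}$. In this latter case, the translate $B := (Hg_0 \cap S)g_0^{-1}$ lies in $H \cap S^2$ with $|B| > |H|/q^{1/1000}$. Since $q^{1/1000} < \degC(H)^{1/3}$, applying \fref{cor:gowers-trick} inside $H$ yields $B^3 = H$; in particular $|S^2 \cap H| \ge |B|$ is very close to $|H|$, and a parallel analysis gives a quantitative lower bound on $|S^k \cap H|/|S^2 \cap H|$ for modest $k$. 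Plugging this into \fref{lem:induce-from-growing-subgroup} produces a lower bound on $|S^{k+1}|/|S|$ which, combined with the upper bound from \fref{prop:Helfgott-3-is-enough} and the bound $|S| \le |G|/\degC(G)^{1/3}$, should yield a numerical contradiction and thus $S^3 = G$.

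The hard part will be the final exponent bookkeeping: the ``gain'' from growth inside $H$ must beat the ``loss'' from Helfgott's inequality scaling as $K^{k-2}$, while we ultimately still need to conclude about $S^3$ rather than some higher power. The generous slack in the exponent $1/1000$ is precisely what should allow one to absorb these inefficiencies; tightening it would presumably require more than a greedy argument (and in fact anticipates the full Product theorem).
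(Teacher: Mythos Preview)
Your plan has a genuine gap in the ``concentrated'' case. Because $H\subseteq S$ and $H$ is a subgroup, you already have $H=HH\subseteq S^2$, so $S^2\cap H=H$ exactly. Consequently $|S^k\cap H|/|S^2\cap H|\le 1$ for every $k$, and \fref{lem:induce-from-growing-subgroup} applied to $H$ yields only the trivial bound $|S^{k+1}|\ge|S|$. There is no growth to extract from $H$ itself; the ``parallel analysis'' you allude to cannot get started. (Relatedly, your Case~1 is essentially vacuous: from $HS\subseteq S^2$ and $|S^2|\le|S^3|\le K|S|$ you get $m|H|=|HS|\le q^{1/1000}|S|$ automatically, so you are always in Case~2.)

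What the paper does instead is \emph{conjugate}. Since $S$ generates $G$, if $S^6$ fails to contain some conjugate of $H$ then there is a conjugate $H_0\subseteq S^6$ and an $s\in S$ with $H_0^s\not\subseteq S^6$. By \fref{cor:gowers-trick} this forces $|S^2\cap H_0^s|\le |H|\cdot\degC(H)^{-1/3}$, while $H_0^s\subseteq S^8$; now \fref{lem:induce-from-growing-subgroup} applied to $H_0^s$ (not to $H$!) gives $|S^9|/|S|\ge\degC(H)^{1/3}$, and \fref{prop:Helfgott-3-is-enough} finishes. The remaining case---$S^6$ contains every conjugate of $H$---needs a separate ingredient you do not mention: then $S^6$ contains a full non-central conjugacy class, and one invokes a covering-number bound (Lawther--Liebeck) to get $S^{O(n)}=G$, after which \fref{prop:PSLn-gowers-trick} and \fref{prop:Helfgott-3-is-enough} close out. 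Your double-coset observation is a reasonable instinct but does not substitute for either of these steps.
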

\begin{proof}
  We have $\degC(H)\ge\frac{q-1}2$.
  Assume first, that $S^6$ does not contain all conjugates of $H$ in $G$.
  Since $S$ generates $G$,
  there exists a conjugate $H_0$ of $H$
  such that
  $H_0$ is contained in $S^6$, and an $S$-conjugate, say $H_0^s$
  of $H_0$ is not contained in $S^6$.
  Then we have
  $$
  \big|S^2\cap H_0^s\big| \le \big|H_0\big|\cdot \degC(H)^{-\frac13}
  $$
  since otherwise 
  $S^6$ would contain $H_0^s$
  by \fref{cor:gowers-trick}.
  It follows from the assumptions on $H_0$
  that $H_0^s$ is contained in $S^8$.
  Hence by \fref{lem:induce-from-growing-subgroup}
  we have
  $$
  \frac{|S^9|}{|S|} \ge
  \frac{|S^8\cap H_0^s|}{|S^2\cap H_0^s|} \ge
  \degC(H)^{\frac13} \;.
  $$
  By \fref{prop:Helfgott-3-is-enough}
  this implies
  $$
  \left(\frac{|S^3|}{|S|}\right)^7 \ge
  \frac{|S^9|}{|S|} \ge
  \degC(H)^{\frac13} \;.
  $$
  Therefore
  $$
  \big|S^3\big| \ge
  |S|\cdot \degC(H)^{\frac1{27}} =
  |S|\cdot \left(\frac{q-1}2\right)^{\frac1{27}}
  $$
  and our statement follows in this case.

  Assume now that $S^6$ contains all conjugates of $H$,
  but $S^3\neq G$.
  In this case $S^6$ contains a non-central conjugacy class $C$ of $G$
  together with $C^{-1}$.
  By a result of Lawther and Liebeck
  \cite{Lawther-Liebeck:diameter-Lie-type-conjugacy-class:1998}
  (see also \cite{Ellers-Gordeev-Herzog:Covering-numbers-Chevalley:1999}),
  if $\CK$ is any non-central conjugacy class of $SL(n,q)$
  ($q\ge4$),
  then $\big(\CK\cup\CK^{-1}\big)^{40n}=SL(n,q)$.

  This implies $S^{240n}=G$.
  On the other hand, since $S^3\neq G$,
  by \fref{prop:PSLn-gowers-trick}
  (which also holds for $SL(n,q)$),
  we have $|S|\le 2|G|\big/q^{\frac{n-1}3}$.
  By \fref{prop:Helfgott-3-is-enough}
  $$
  \left(\frac{|S^3|}{|S|}\right)^{240n-2}\ge
  \frac{|S^{240n}|}{|S|} =
  \frac{|G|}{|S|} \ge
  \frac12 q^{\frac{n-1}3}
  $$
  and our statement follows.
\end{proof}

Essentially the same result continues to hold when $G$ is an arbitrary simple
group of Lie type over $\Fq$.
More significantly,
if in \fref{thm:Baby-Product-thm}
we replace $H$ by any subgroup with $\degC(H)\ge q^{\CO(1)}$,
we still obtain the conclusion $\big|S^3\big|\ge |S|\cdot q^{\CO(1)}$
(unless $S^3=G$).
Here $H$ could be e.g. a Suzuki subgroup, or a subfield subgroup
$SL(n,q_0)$ for some $\BF_{q_0}\le\Fq$.

\begin{defn}
  A family of finite groups $G_i$ is an expanding family
  if, for some natural number $k$,
  there exist generating subsets $S_i$ of $G_i$ of size at most $k$
  such that the graphs $Cay(G_i,S_i)$ form an expander family.
\end{defn}

In \cite{Kassabov-Lubotzky-Nikolov:simple-group-expanders:2006}
Kassabov, Lubotzky and Nikolov announced
that all finite simple groups,
except possibly the Suzuki groups,
form an expanding family.
The same is now known for the Suzuki groups as well
by \cite{Breuillard-Green-Tao:suzuki-expander:2011},
and more generally by \fref{thm:random-expander-BGGT}.

As an important tool in the proof of this result
Lubotzky \cite{Lubotzky:Lie-type-expanders:2011}
has shown, that a simple group of Lie type of rank $r$
over the field $\Fq$
(not a Suzuki group)
decomposes as a product of $f(r)$ subgroups isomorphic to
$SL(2,q)$ or $PSL(2,q)$.
The argument in \cite{Lubotzky:Lie-type-expanders:2011}
is based on model theory.
\fref{thm:Baby-Product-thm},
extended to simple groups of Lie type,
easily implies such a result.
Our proof (assuming known results)
is short and elementary.

Another group-theoretic proof,
that gives a quadratic bound for $f(r)$,
which uses \fref{lem:turbo-gowers-trick} in a different way,
is given in \cite{Liebeck-Nikolov-Shalev:product-of-SL2:2011}.
That proof does not yield growth results,
and does not apply to, say, Suzuki subgroups

Somewhat surprisingly the following affine variant of
\fref{thm:Baby-Product-thm} is also true.

\begin{lem}\label{lem:conjugation-trick}
  Let $ H$ be a $d$-generated finite group with $\degC( H)>K^{21}$,
  and $A$ a $\BZ H$-module.
  Let $0\in S\subseteq A$ be
  a symmetric $ H$-invariant set generating $A$.
  Then (with multiplicative notation) \
  $|S^3|>K\cdot|S|$
  or $S^{7d}$ contains the submodule $[ H,A]$.
\end{lem}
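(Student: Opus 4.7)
Assume $|S^3|\le K|S|$; I aim to show $S^{7d}\supseteq [H,A]$.

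\emph{Step 1: Reduction to generators.} Pick generators $h_1,\dots,h_d$ of $H$. The quotient $A/\sum_{i=1}^d(h_i-1)A$ is fixed by each $h_i$ and therefore by all of $H$, so $[H,A]\subseteq\sum_{i}(h_i-1)A$; the reverse inclusion is immediate. It therefore suffices to show $(h_i-1)A\subseteq S^{7}$ for each $i$, since summing $d$ subsets of $S^{7}$ gives a set contained in $S^{7d}$.

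\emph{Step 2: A small-sumset hull.} For each $i$ set $T_i:=\{h_is-s:s\in S\}$. Using $H$-invariance of $S$, symmetry, and $0\in S$, we have $T_i\subseteq S+S=S^{2}$, $T_i=-T_i$, $0\in T_i$, and $\langle T_i\rangle=(h_i-1)A$ (because $h_i-1$ is a group endomorphism of $A$ sending the generating set $S$ onto $T_i$).

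\emph{Step 3: Main step via Gowers.} For each $i$, let $B_i\trianglelefteq A$ be the smallest $H$-invariant submodule containing $(h_i-1)A$, and form the semidirect product $G_i:=B_i\semidirect H$. Inside $G_i$ the subset $T_iH$ has cardinality $|T_i|\cdot|H|$. A direct computation in the semidirect product shows that every element of $(T_iH)^3$ has $A$-coordinate of the form $t_1+ht_2+hh't_3$ with $t_j\in T_i\subseteq 2S$ and $h,h'\in H$; since $2S$ is itself $H$-invariant, each summand lies in $2S$, so the $A$-coordinate lies in $6S\subseteq S^{7}$. Invoking \fref{cor:gowers-trick} in $G_i$: once
$$
|T_iH|\cdot\degC(G_i)^{1/3}>|G_i|,
$$
we obtain $(T_iH)^3=G_i$, and projecting onto the $A$-coordinate gives $B_i\subseteq S^{7}$, in particular $(h_i-1)A\subseteq S^{7}$ as desired.

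\emph{Main obstacle: the density hypothesis.} Verifying $|T_iH|\cdot\degC(G_i)^{1/3}>|G_i|$ is the technical crux. I plan to combine three ingredients: (a) the surjection $h_i-1\colon A\twoheadrightarrow(h_i-1)A$ with fibres of size $|A^{h_i}|$, giving $|T_i|\ge|S|/|A^{h_i}|$; (b) Clifford theory for the normal abelian subgroup $B_i\trianglelefteq G_i$, converting the hypothesis $\degC(H)>K^{21}$ into a comparable lower bound on $\degC(G_i)$; and (c) Pl\"unnecke iterations (\fref{prop:Helfgott-3-is-enough}) applied to the chain $S\subseteq S^{2}\subseteq\cdots$ to bound $|A|/|S|$ and hence $|B_i|/|T_i|$. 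The exponent $21=3\cdot 7$ in the hypothesis is calibrated precisely to absorb both the cubing in Gowers' trick and the Pl\"unnecke loss incurred by passing up to the seventh sumset; making these constants line up is the delicate point.
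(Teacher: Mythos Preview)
The paper states this lemma without proof, so there is no ``paper's own proof'' to compare against; it is presented as an affine analogue of the Baby Product theorem (\fref{thm:Baby-Product-thm}) whose proof was a greedy escape argument based on \fref{cor:gowers-trick}. I can therefore only assess your plan on its merits.

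Steps~1 and~2 are sound. Step~3, however, has two genuine gaps that your ``main obstacle'' paragraph does not resolve.

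\textbf{(i) The Clifford step goes the wrong way.} Clifford theory for the abelian normal subgroup $B_i\trianglelefteq G_i=B_i\rtimes H$ gives $\deg_\BC(G_i)\le\deg_\BC(H)$ (irreducibles of $H$ inflate to $G_i$), but you need a \emph{lower} bound. In fact, if $[H,B_i]\ne B_i$ then any nontrivial character of $B_i/[H,B_i]$ is $H$-fixed and extends to a one-dimensional character of $G_i$, so $\deg_\BC(G_i)=1$ and Gowers gives nothing. You would have to arrange $[H,B_i]=B_i$ first; only then does the orbit of every nontrivial character have size at least $1+\deg_\BC(H)$ (since proper subgroups of the perfect group $H$ have index $\ge1+\deg_\BC(H)$), yielding $\deg_\BC(G_i)\ge\deg_\BC(H)$. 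Your choice $B_i=\text{the $H$-submodule generated by }(h_i-1)A$ does not guarantee this.

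\textbf{(ii) Pl\"unnecke does not bound $|A|/|S|$.} The hypothesis $|S^3|\le K|S|$ controls $|S^m|/|S|$ for each fixed $m$, but $A=\bigcup_m S^m$ may be arbitrarily large, indeed infinite. Consequently there is no a~priori bound on $|B_i|/|T_i|$, and the density condition $|T_i|>|B_i|\,\deg_\BC(G_i)^{-1/3}$ cannot be verified by your ingredient~(c). (That $[H,A]$ is finite is part of what the lemma asserts, not something you can assume.)

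Both issues point toward a different architecture: rather than a single global application of Gowers to a semidirect product whose abelian part you cannot control, the factor $d$ in the exponent~$7d$ and the paper's ``affine variant of \fref{thm:Baby-Product-thm}'' hint suggest a greedy escape argument --- locate, among the translates $a+S$ with $a\in[H,A]$ already reached, one whose image under some $(h_i-1)$ fails to land in the next sumset, and extract growth from that discrepancy via \fref{lem:induce-from-growing-subgroup} and \fref{prop:Helfgott-3-is-enough}, iterating over the $d$ generators.
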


This plays an essential role in proving
the Polynomial Inverse theorem
for linear groups over finite fields.

We end this section with a question vaguely related to super-strong
approximation (see e.g. \cite{Breuillard:survey:2012}).

\begin{question} \label{question:expanding-diameter}
  Let $G_i$ be an expanding family of finite groups.
  Is it true that
  for all $i$ and every symmetric generating set $S_i$ of $G_i$
  we have $\diam\big(Cay(G_i,S_i)\big) \le  C\big(\log|G_i|\big)^c$
  where the constants $c$ and $C$ depend only on the family?
\end{question}

Note that answering an earlier question of Lubotzky and Weiss 
\cite{Lubotzky-Weiss:groups-and-expanders:2009}
in the negative,
in \cite{Alon-Lubotzky-Wigderson:semidirect-zigzag-graphs:2001}
some expanding families of groups $G_i$ are constructed together with
bounded sized generating sets $S_i$ such that the corresponding Cayley graphs
do not form an expander family
(see also \cite{Meshulam-Wigderson:expanders-in-groupalgebras:2004}).
The alternating groups also have these properties,
but the fact that they form an expanding family was proved somewhat later
in the breakthrough paper of Kassabov
\cite{Kassabov:Symmetric-group-expanders:2007}.

Note also that
since nonabelian finite simple groups form an expanding family,
a positive answer to \fref{question:expanding-diameter}
would imply Babai's conjecture.

\section{On the proof of the Product theorem}
\label{sec:proof-prod-theor}

In this section we describe the main ideas in the proof of the Product theorem
for $SL(n,q)$.
Simple groups of Lie type can be handled by essentially the same argument.
A far-reaching generalisation of the argument plays an important role in the
proof of the Polynomial Inverse theorem.

In the course of the proof we obtain various results which say that if $L$ is
a ``nice'' subgroup of an algebraic group $G$,
generated by a set $A$, then $A$ grows in
some sense.
These were motivated by earlier results of Helfgott
\cite{Helfgott:SL-2-p:2008}, \cite{Hefgott:SL-3-p:2011},
Hrushovski-Pillay \cite{Hrushovski-Pillay:Definable-subgroups:1995}, and
\fref{thm:Baby-Product-thm}.

Assume for example that $A$ generates
$L=SL(n,q)$ ($q$ a power of a prime $p$),
which is a subgroup of $G=SL(n,\Fpclosed)$,
and ``$A$ does not grow''
i.e. $|A^3|$ is not much larger than $|A|$.  Using an ``escape from
subvarieties'' argument
Helfgott \cite{Hefgott:SL-3-p:2011} proved the following useful lemma:
If $T$ is a
maximal torus in $G$ then $|T \cap A|$ is not much larger than
$|A|^{1/(n+1)}$ .  This is natural to expect for dimensional reasons
since $\dim(T)/\dim(G)=(n-1)/(n^2-1)=1/(n+1)$. 

In particular we see that, if $A$ contains a maximal torus of $L$,
then $A$ is a very large subset of $L$,
hence $A^3=L$ by \fref{cor:gowers-trick}. 
This consequence of Helfgott's lemma
is very similar to our Baby Product theorem.

What is the right generalisation of these results?
Our first answer to this question \cite{Pyber-Szabo:honlapon},
which is subsumed by
\cite[Theorem 49]{PyberSzabo:2010:growth},
is the following inequality.

\begin{thm} \label{thm:LP-inequality}
  Let $\varepsilon>0$ be a fixed constant.
  Let $G$ be a linear algebraic group defined over $\Fq$
  which satisfies the following conditions.
  \begin{enumerate}[(a)]
  \item 
    The centraliser of $G(\Fq)$ (the group of $\Fq$-points)
    in $G$ is finite.
  \item 
    $G(\Fq)$
    does not normalise any closed subgroup $H< G$ with
    $0<\dim(H)<\dim(G)$.
  \end{enumerate}
  Let $1\in A$ be a generating set of $G(\Fq)$
  and $V$ a subvariety of $G$ of positive dimension.
  If $A\cap V$ is large enough
  (i.e. greater than some appropriate function of $\varepsilon$, $\dim(G)$,
  and the degrees of the varieties $G$ and $V$)
  then
  $$
  \big|A^m\big| \ge
  \big|A\cap V\big|^{(1-\varepsilon)\dim(G)/\dim(V)}
  $$
  for some $m$ which depends only on $\varepsilon$ and $\dim(G)$.
\end{thm}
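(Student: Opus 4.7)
The strategy combines a Helfgott-style escape-from-subvarieties argument with an iterative dimension-growing product construction, followed by a fibre-counting estimate. Set $S = A \cap V$, $N = |S|$, $n = \dim G$, $d = \dim V$. Replacing $V$ by one of its top-dimensional irreducible components costs a factor at most $\deg V$, which is absorbed by the $(1-\varepsilon)$ slack once $N$ is large; so assume $V$ is irreducible.

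Under (a) and (b), the Zariski closure $\overline{G(\Fq)}$ is a closed subgroup of $G$ normalised by $G(\Fq)$. By (b) it is either $G$ itself or zero-dimensional; in the latter case $G(\Fq)$ is finite and (a) forces $G$ itself to be finite, making the statement vacuous once $N$ exceeds $|G(\Fq)|$. Hence $G(\Fq)$ is Zariski-dense in $G$, and a standard inductive escape argument (cf.\ \cite{Hefgott:SL-3-p:2011}) supplies, for every proper closed subvariety $W \subsetneq G$, an element $a \in A^{c}$ with $a \notin W$, where $c$ depends only on $n$ and $\deg W$. I then construct conjugates $V^{g_1},\dots,V^{g_k}$ with $g_j \in A^{c_0}$ (where $c_0 = c_0(n)$ and $k \le \lceil n/d \rceil$) so that $W_j = V^{g_1}\cdots V^{g_j}$ satisfies $\dim W_{j+1} > \dim W_j$ until $\dim W_k = n$. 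The obstruction set
$$E_j = \bigl\{\, g \in G : \dim(W_j \cdot V^g) = \dim W_j \,\bigr\}$$
is constructible; if it were Zariski-dense in $G$, then $\overline{W_j}$ would be stable under right multiplication by the normal subgroup $N_V \triangleleft G$ generated by all conjugates of $V$. By (b), $N_V$ is either trivial (impossible since $\dim V > 0$) or all of $G$, forcing $\overline{W_j} = G$ and contradicting $\dim W_j < n$. Hence $E_j$ lies in a proper subvariety and escape supplies $g_{j+1} \in A^c \setminus E_j$; after at most $k$ iterations the multiplication map $\mu : V^{g_1} \times \cdots \times V^{g_k} \to G$ is dominant.

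Set $\Pi = S^{g_1}\cdots S^{g_k} \subseteq A^M$ for $M = M(n)$ bounded. Generic fibres of $\mu$ are subvarieties of $G^k$ of dimension $kd - n$ and bounded degree. Applying the theorem inductively, phrased so as to bound $|A^{m'} \cap W|$ for subvarieties $W$ of an arbitrary ambient $G^\ell$, yields for each generic fibre $F$
$$\bigl|(S^{g_1}\times\cdots\times S^{g_k}) \cap F\bigr| \ll |A^{M'}|^{(kd-n)/n}.$$
Combining with the trivial bound $|S^{g_1}\times\cdots\times S^{g_k}| = N^k$ gives
$$|A^{M}|\cdot |A^{M'}|^{(kd-n)/n} \gg N^k,$$
whence $|A^{M}|^{kd/n} \gg N^k$, i.e.\ $|A^{M}| \gg N^{n/d}$. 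The implicit constant, depending only on $n$ and the degrees of $G$ and $V$, is absorbed by replacing the exponent $n/d$ with $(1-\varepsilon)n/d$ once $N$ exceeds the threshold indicated in the statement.

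The crux is the final counting step: turning the variety-level dimension count $kd-n$ into a bona fide bound on the number of $A$-tuples in each fibre. The natural framework is to prove the theorem as a joint induction over ambient algebraic groups $G^\ell$ and the dimensions of the subvarieties within them, so that the theorem itself furnishes the fibre estimate. Stratifying off the finitely many exceptional (non-generic) fibres, and using (a)--(b) to control the $A$-points they contain, is the main technical work. The dimension-increment step is conceptually cleaner but still hinges on (b) to prevent $W_j$ from stagnating inside a proper normal subgroup before reaching full dimension.
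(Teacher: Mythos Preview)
Your proposal follows the Larsen--Pink/Breuillard--Green--Tao template (multiply conjugates of $V$ until the product map is dominant, then bound the fibres), which is a legitimate route to inequalities of this type. But note that the present paper is a survey and does not give a self-contained proof of this theorem; it refers to \cite[Theorem~49]{PyberSzabo:2010:growth} and sketches instead the more general ``Spreading theorem'' in Section~4. That approach is genuinely different from yours: rather than passing to $G^k$ and counting fibres, one works entirely inside $G$ with the notion of \emph{concentration} $\frac{\log|S\cap V|}{\dim V}$ and shows that high concentration in any subvariety can be ``spread'' into a connected closed subgroup $H\le G$ normalised by $S$; hypothesis~(b) then forces $H=G$. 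The machinery used there is that of CCC-subgroups (connected centralisers of connected subgroups) playing the role of generalised tori, not a product/fibre induction.

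As written, your argument has a real gap at the fibre-counting step. You invoke the theorem ``inductively, phrased so as to bound $|A^{m'}\cap W|$ for subvarieties $W$ of an arbitrary ambient $G^\ell$'', but the theorem you are proving is stated only for a single group $G$ satisfying (a) and (b), and $G^k$ manifestly fails~(b) (each factor is a proper normal subgroup of intermediate dimension). So you are not applying the theorem, you are applying a strictly stronger statement that you have neither formulated nor proved. The Larsen--Pink style argument does run as a joint induction, but the inductive hypothesis has to be set up carefully (typically as an upper bound $|A^m\cap W|\le |A^m|^{\dim W/\dim G}$ for all closed $W$ of bounded complexity, proved by induction on $\dim W$); you cannot bootstrap it from the statement as given. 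A secondary issue: your stabilisation argument for $E_j$ implicitly uses $1\in V$ (to get $\overline{W_j}\subseteq\overline{W_j\cdot V^g}$), which is not assumed, and you should also check that $E_j$ is contained in a proper closed subvariety of degree bounded in terms of $n$ and $\deg G,\deg V$, since escape needs a degree bound, not just properness.
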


Taking $G$ to be $SL(n,\Fpclosed)$ and $T$ a maximal torus in $G$
we obtain Helfgott's lemma.
Here we use the fact, essential to Helfgott's approach,
that if $A^3$ is not much larger than $A$
then $A^m$ is not much larger either
(see \fref{prop:Helfgott-3-is-enough}).

Breuillard, Green and Tao \cite{BrGrTao:ProductTheorem:2011}
arrived at a similar, somewhat stronger inequality
by taking hints from a paper of Hrushovski
\cite{Hrushovski:approximate-subgroups:2012}.
(Our inequality was obtained in March 2009,
independently of the work of Hrushovski
\cite{Hrushovski:approximate-subgroups:2012}).
For a formulation of their result called a Larsen-Pink type inequality
in \cite{BrGrTao:ProductTheorem:2011}
see \cite{Breuillard:survey:2012}.

Anyway, the inequality says that
if a generating set of $L=SL(n,q)$ does not grow
then it is distributed in a ``balanced way'' in $L$.

How do we break this balance?
We have to start conjugating!
More precisely, we have to answer another question.
Which are the ``interesting subvarieties'' to which \fref{thm:LP-inequality}
should be applied to?
Centralisers and conjugacy classes,
as explained below.

For an element $x$ consider the conjugation map
$g\to x^g$.
The image of this map is $cl(x)$, the conjugacy class of $x$ in $G$,
and the fibres are cosets of the centraliser $\CC_G(x)$.
For the closure $\cl{cl(x)}$ of the set $cl(x)$
this implies, that
$\dim\big(\CC_G(x)\big) + \dim\big(\cl{cl(x)}\big) = \dim(G)$.
It follows that an upper bound for $\big|A\cap\cl{cl(x)}\big|$
implies a lower bound for $\big|A\cap\CC_G(x)\big|$
which matches the upper bound given by \fref{thm:LP-inequality}.

Maximal torii are the centralisers of their regular semisimple elements
(that is elements with all eigenvalues distinct).

Let us say that $A$ \emph{covers} a maximal torus $T$ if
$\big|T \cap A\big|$
contains a regular semisimple element.  We obtain the following
fundamental dichotomy (see \cite[Lemma~60]{PyberSzabo:2010:growth}):

\emph{ Assume that a generating set A does not grow
  \begin{enumerate}[i)]
  \item If $A$ does not cover a maximal torus $T$ then $\big|T \cap
    A\big|$ is not much larger than $|A|^{1/(n+1) -1/(n^2-1)}$.
  \item If $A$ covers $T$ then $\big|T \cap AA^{-1}\big|$ is not much
    smaller than $|A|^{1/(n+1)}$.
  \end{enumerate}
}

To break the balance when $A$ is not very large,
we only have to start conjugating torii with elements of $A$.
That is, an argument analogous to the proof of our Baby Product theorem
based on the above dichotomy
completes the proof of the Product theorem.

\section{On the proof of the Polynomial Inverse theorem}

A large part of the proof (already contained in 
\cite{PyberSzabo:2010:growth})
is to show that if $S$ is a symmetric subset of $GL(n,\Fclosed)$
such that $\big|S^3\big|\le K|S|$,
then $S$ is contained in the union of polynomially many cosets of a
soluble-by-finite subgroup.

For the proof of this we first establish a generalisation of the
\fref{thm:LP-inequality}, called the ``Spreading theorem''
\cite[Theorem 49.]{PyberSzabo:2010:growth}. 
Roughly speaking it says the following.

For a subvariety $V$ of positive dimension we define the \emph{concentration}
of a finite set $S$ in $V$ as $\frac{\log|S\cap V|}{\dim(V)}$.

Let $S$ be a finite subset
in a connected linear algebraic group $G$ such that
$\CC_G(S)$ is finite.
If $G$ has a subvariety $X$ in which $S$ has much larger concentration
than in $G$ then we can find a connected closed subgroup $H\le G$
normalised by $S$
in which a small power of
$S$ has similarly large concentration.
(When $G$ is the simple algebraic group used to define a 
finite group of Lie type $L$ and $S$ generates $L$
then $H$ turns out to be $G$ itself.)

Next we have to introduce certain ``generalised torii''.
If $G$ is a simple algebraic group then a maximal torus $T$
can be obtained as the
centraliser of a regular semisimple element of $T$.
Moreover, most elements of $T$ are regular semisimple.
Since $T$ is abelian, it coincides with its centraliser $\CC_G(T)$.

In an arbitrary non-nilpotent linear algebraic group $G$
we consider a class of subgroups
we call CCC-subgroups with similar properties.
Our CCC-subgroups are the connected centralisers of connected subgroups
(which explains the name).

For a subvariety $X$ denote by $X^\gen$ the set of all $\dim(G)$-tuples 
$\ug\in\prod^{\dim(G)}X$
such that
the connected centralisers $\CC_G(\ug)^0$ and $\CC_G(X)^0$ are equal.

It turns out that if $X$ is a CCC-subgroup then $X^\gen$ is a
dense open subset of 
$\prod^{\dim(G)}X$. Moreover, if $Y$ is another CCC-subgroup,
then $X^\gen\cap Y^\gen=\emptyset$.
Finally we have $\CC_G\big(\CC_G(X)^\circ\big)^\circ=X$.

If we replace the torii in the proof of the Product theorem
with CCC-subgroups,
the proof goes through,
and shows that a small power of $S$ has very large concentration
in some subgroup $H$ of $G$ normalised by $S$.

An induction argument then shows
that $S$ is contained in polynomially many cosets of some soluble-by-finite
subgroup, as we wanted.

With various additional arguments this result can be upgraded as follows
\cite{PyberSzabo:2010:growth}.

\begin{thm} \label{thm:Hrushowsky-type}
  Let $1\in S$ be a finite symmetric subset of $GL(n,\BF)$
  such that $\big|S^3\big|\le K|S|$ for some $K\ge1$.
  Then $S$ can be covered by polynomially many cosets of some
  soluble-by-finite subgroup $\Gamma$ normalised by $S$.
  Moreover, $\Gamma$ has a soluble normal subgroup $H$
  such that $\Gamma\subset S^6H$.
\end{thm}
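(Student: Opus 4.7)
The plan is to induct on the dimension of the Zariski closure $G$ of $\Span{S}$ in $GL(n,\cl{\BF})$. If the identity component $G^\circ$ is already soluble, then $\Span{S}$ is soluble-by-finite of $n$-bounded index and the theorem is essentially trivial in this case: take $\Gamma = \Span{S}$, $H=\Span{S}\cap G^\circ$, and observe that the finite quotient $\Gamma/H$ is generated by short words in $S$, so $S^6$ already meets every coset of $H$ in $\Gamma$ once we have chosen $H$ appropriately. Henceforth assume $G^\circ$ is not soluble.

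In the inductive step the task is to produce a proper closed connected subgroup $N\normaleq G^\circ$ normalised by $S$ such that $S$ is covered by $K^{c(n)}$ cosets of $N$; then one descends into $N$ and applies the induction hypothesis. Following the strategy of Section~\ref{sec:proof-prod-theor}, I would replace maximal tori by CCC-subgroups and establish the analogue of the tori dichotomy: either there is a CCC-subgroup $X$ with $|S^2\cap X|$ much smaller than $|S|^{\dim X/\dim G^\circ}$, in which case $S$ must concentrate more than generically on some positive-dimensional subvariety of $G^\circ$ and the Spreading theorem (the upgrade of \fref{thm:LP-inequality}) manufactures such an $N$; or every CCC-subgroup is covered by $S^c$, in which case conjugating a single CCC-subgroup by elements of $S$ and using the pairwise disjointness of the generic loci $X^\gen$ for distinct CCC-subgroups produces growth $|S^m|\gg K^2|S|$, contradicting \fref{prop:Helfgott-3-is-enough}. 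Once $N$ is obtained, a Ruzsa covering argument gives the coset bound, and for coset representatives $g_i\in S^2$ each set $g_i^{-1}S\cap N$ is (after symmetrising) a subset of $N$ with tripling constant $K^{O(1)}$, ready for the inductive hypothesis.

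Finally I would upgrade the output $\Gamma_0$ of the induction (soluble-by-finite with soluble radical $H_0\normaleq\Gamma_0$ of $n$-bounded index) to the refined statement $\Gamma\subset S^6H$. The quotient $\Gamma_0/H_0$ is a finite subgroup of $GL_n(\cl{\BF})$ modulo its soluble radical, hence of order bounded in terms of $n$, so it is generated by $S\cdot H_0/H_0$ (by the inductive construction). Choosing coset representatives in short words of $S$ and enlarging $H_0$ slightly to absorb any surplus yields $\Gamma$ still normalised by $S$, still soluble-by-finite, still covering $S$ in polynomially many cosets, with $H\normaleq\Gamma$ soluble and $\Gamma\subset S^6H$ by construction.

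The main obstacle is the CCC-subgroup dichotomy and the verification of the Spreading theorem in the necessary generality: one has to show that CCC-subgroups in an arbitrary (non-simple, non-semisimple) linear algebraic group play the role that maximal tori play in the simple case, in particular that their generic loci $X^\gen$ are dense open and pairwise disjoint for distinct CCC-subgroups $X$, and that the double-centraliser identity $\CC_G(\CC_G(X)^\circ)^\circ = X$ holds. Once these structural facts are in place the combinatorial growth argument adapts the Product theorem proof; the main remaining difficulty is keeping polynomial control on the number of covering cosets through every level of the induction, since the dimension can drop by only one at a time and the tripling constants accumulate at each step.
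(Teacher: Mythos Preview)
Your overall architecture matches the paper's: the Spreading theorem, CCC-subgroups replacing maximal tori, the disjointness of the generic loci $X^\gen$, and induction on the dimension of the Zariski closure are exactly the ingredients the paper sketches. Two points deserve comment.

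First, your dichotomy in the inductive step is phrased backwards. Having $|S^2\cap X|$ \emph{small} for some CCC-subgroup $X$ does not by itself produce over-concentration elsewhere. The actual mechanism is the one in \fref{sec:proof-prod-theor}: when $S$ \emph{covers} a CCC-subgroup $X$ one gets a \emph{lower} bound $|S^m\cap X|\gtrsim |S|^{\dim X/\dim G^\circ}$ via the centraliser/conjugacy-class duality; conjugating $X$ by elements of $S$ and using the disjointness of the sets $Y^\gen$ then forces growth unless $S$ is already highly concentrated in a proper closed connected subgroup normalised by $S$, at which point the Spreading theorem applies. This is a matter of exposition rather than a fatal error, but as written your first branch does not lead anywhere.

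Second, and more seriously, your upgrade to $\Gamma\subset S^6H$ has a genuine gap in positive characteristic. You assert that $\Gamma_0/H_0$, being ``a finite subgroup of $GL_n(\cl{\BF})$ modulo its soluble radical'', has order bounded in terms of $n$. This is Jordan's theorem and it is \emph{false} when $\mathrm{char}(\BF)=p>0$: already $SL(2,q)\le GL(2,\Fq)$ has trivial soluble radical and unbounded order. The paper is explicit that this is where the difficulty lies: in characteristic~$0$ the bounded-index fact finishes the proof immediately (and was done earlier by Breuillard--Green--Tao), but in positive characteristic the upgrade requires, in the paper's words, ``various additional arguments'', and the subsequent discussion of Weisfeiler's theorem (\fref{prop:Weisfeiler}) and of $Lie^*(p)$ quotients indicates what replaces the Jordan bound. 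Your proposal does not address this, so as it stands it only proves the theorem over fields of characteristic~$0$.
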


This is essentially a polynomial version of a result of Hrushovski
\cite{Hrushovski:approximate-subgroups:2012}.

Since a soluble-by-finite subgroup of $GL(n,\BC)$ has a soluble subgroup of
bounded index,
\fref{thm:Hrushowsky-type}
implies the Polynomial Inverse theorem for fields of characteristic zero.
As mentioned before, the characteristic zero case was first obtained by
Breuillard, Green and Tao
\cite{BrGrTao:ProductTheorem:2011}.

In the rest of this section $\BF$ denotes a field of characteristic $p>0$.

To complete the proof of \fref{thm:PSz-Polynomial-Inverse-thm}
next we have to settle the case of finite linear groups.

\begin{defn}
  As usual, $O_p(G)$ denotes the maximal
  normal $p$-subgroup of a finite group $G$.
  A group is called \emph{perfect}, if it has no abelian quotients.
  A finite group is called
  \emph{quasi-simple} if it is a perfect central extension
  of a finite simple group.
  We denote by $Lie^{*}(p)$ the set of central products of quasi-simple
  groups of Lie type of characteristic $p$.
\end{defn}

The following deep result is essentially due to Weisfeiler
\cite{Weisfeiler:post-classification-Jordan-thm:1984}.

\begin{prop} \label{prop:Weisfeiler}
  Let $G$ be a finite subgroup of $GL(n,\BF)$.
  Then $G$ has a normal
  subgroup $H$ of index at most $f(n)$ such that $H\ge O_p(G)$
  and $H/O_p(G)$ is the central product of an abelian $p'$-group and
  quasi-simple groups of Lie type of characteristic $p$, where the bound
  $f(n)$ depends on $n$.
\end{prop}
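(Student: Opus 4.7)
My plan is to pass to $\bar G=G/O_p(G)$ and analyse its generalised Fitting subgroup $F^*(\bar G)=F(\bar G)E(\bar G)$, using the Classification of Finite Simple Groups through the Landazuri--Seitz--Feit--Tits lower bounds on minimal representation degrees. Any normal subgroup $\bar H\normaleq\bar G$ of the required form lifts to $H\normaleq G$ containing $O_p(G)$ with $H/O_p(G)\isom\bar H$, so this reduction is harmless.

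In $\bar G$ we have $O_p(\bar G)=1$, so $F(\bar G)$ is a nilpotent $p'$-group acting completely reducibly on the natural module (Maschke). The classical Jordan-type structure theorem for completely reducible nilpotent linear groups provides a characteristic abelian subgroup $A\le F(\bar G)$ of $n$-bounded index. For the layer, let $L_1,\dots,L_k$ be the quasi-simple components of $E(\bar G)$; by standard Clifford-theoretic dimension counts their number $k$ is $n$-bounded, and each $L_i$ carries a nontrivial projective $\BF$-representation of degree at most $n$. The Landazuri--Seitz--Feit--Tits lower bounds on cross-characteristic representations of quasi-simple groups of Lie type, together with the trivial bounds for alternating and sporadic groups, then force every component \emph{not} of Lie type in characteristic $p$ to have order at most some $f_1(n)$. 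Splitting $E(\bar G)=E_{\rm good}\cdot E_{\rm bad}$ accordingly, both factors are characteristic in $\bar G$, since conjugation preserves the isomorphism type of each component.

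Set $\bar H:=A\cdot E_{\rm good}$: a central product of the abelian $p'$-group $A$ with quasi-simple Lie-type-in-$p$ groups, normal in $\bar G$ by the previous paragraph. To bound $[\bar G:\bar H]$ I argue in three steps. First, $\bar G$ permutes the components with kernel $\bar G_0$ of index at most $k!$. Second, the identity $C_{\bar G}(F^*(\bar G))=Z(F^*(\bar G))$ lets us embed $\bar G_0/F^*(\bar G)$ into $\prod_i\mathrm{Out}(L_i)\times\mathrm{Out}\bigl(F(\bar G)\bigr)$; each $\mathrm{Out}(L_i)$ is $n$-bounded (for Lie-type-in-$p$ components both the rank and $\log_p q$ are controlled by the minimal representation degree; bad components have $n$-bounded order outright), and the image of $\bar G_0$ in $\mathrm{Out}(F(\bar G))$ is bounded because $A$, a commuting family of semisimple $p'$-elements, is conjugate in $GL(n,\bar\BF)$ into a torus whose normaliser modulo centraliser is a Weyl-type quotient of $n$-bounded order. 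Third, $[F^*(\bar G):\bar H]\le[F(\bar G):A]\cdot|E_{\rm bad}|$ is $n$-bounded. Multiplying the three contributions gives $[\bar G:\bar H]\le f(n)$.

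The principal obstacle is the structural analysis in the second paragraph: bounding the order of non-Lie-type-in-$p$ components requires the Landazuri--Seitz lower bounds, whose proofs rest on a detailed CFSG-level understanding of finite quasi-simple groups. It is precisely this CFSG input that makes the theorem ``essentially due to Weisfeiler.''
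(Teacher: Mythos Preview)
The paper does not prove this proposition; it is quoted as a deep result ``essentially due to Weisfeiler,'' with pointers to Collins for the explicit bound $f(n)=(n+2)!$ and to Larsen--Pink for a classification-free version. So there is no proof in the paper to compare against, and your sketch does follow the expected CFSG-based route.

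There is, however, a genuine gap in your index bound. You claim that for a Lie-type-in-$p$ component $L_i$ ``both the rank and $\log_p q$ are controlled by the minimal representation degree,'' and hence $|\mathrm{Out}(L_i)|$ is $n$-bounded. The rank is indeed bounded, but $\log_p q$ is not: for every $f\ge1$ the group $SL(2,p^f)$ embeds in $GL(2,\bar\BF_p)$ via its natural module, while $|\mathrm{Out}(PSL(2,p^f))|\ge f$ because of field automorphisms. In defining characteristic the minimal faithful projective degree of a Chevalley group depends only on the Dynkin type, not on $q$; the Landazuri--Seitz bounds you invoke are cross-characteristic statements and give nothing here. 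What is actually true, and what requires real work, is that the \emph{image} of $N_{\bar G}(L_i)$ in $\mathrm{Out}(L_i)$ is $n$-bounded: realising a field automorphism of order $d$ by conjugation in $GL(n,\bar\BF_p)$ forces the restriction of the natural module to $L_i$ to be stable under a length-$d$ Frobenius orbit on irreducibles, which drives $n$ up. Pinning this down carefully (or bypassing it via an algebraic-group argument \`a la Larsen--Pink) is exactly the delicate core of Weisfeiler's theorem, and your sketch omits it. A smaller slip: $\bar G=G/O_p(G)$ does not act on the natural module, so your appeal to Maschke for $F(\bar G)$ needs a Schur--Zassenhaus lifting step first.
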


It was proved by Collins~\cite{Collins:modular-Jordan-thm:2008} that for $n\ge71$ one can take
$f(n)=(n+2)!$. Remarkably a (non-effective) version of the above result was
obtained by Larsen and Pink
\cite{Larsen-Pink:finite-subgrouls-of-alg-groups:2011}
without relying on the classification of finite simple groups.

It is an easy consequence of Weisfeiler's theorem,
that if $\degC(G)$ is ``large'' then
in fact $G/O_p(G)$ is in $Lie^{*}(p)$.
For such groups one can use \fref{lem:conjugation-trick} inductively
to prove, that if a symmetric set $S$ with $\big|S^3\big|\le K|S|$ 
projects onto $G/O_p(G)$,
then in fact $S^3=G$.

This result, and the Product theorem
are the main ingredients in the proof of the Polynomial Inverse theorem
for finite linear groups.

The following classical theorem of Malcev
(see \cite{Wehrfritz:Infinite-linear-groups:1973}) 
makes it possible to use finite group theory to study properties of finitely
generated linear groups.

\begin{prop}
  \label{prop:Malcev}
  Let $\Gamma$ be a finitely generated subgroup of $GL(n,\BF)$.
  For every finite set of elements $g_1,\dots,g_t$ of $\Gamma$
  there exists a finite field $\BK$ of the same characteristic
  and a homomorphism $\phi:\Gamma\to GL(n,\BK)$
  such that $\phi(g_1),\dots,\phi(g_t)$ are all distinct.
\end{prop}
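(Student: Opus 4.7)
The plan is a standard Nullstellensatz argument, reducing the problem of separating finitely many elements of $\Gamma$ to that of finding a maximal ideal avoiding a single nonzero element of a finitely generated commutative ring. First I would fix a finite generating set $\gamma_1,\dots,\gamma_k$ of $\Gamma$ and let $R\subseteq\BF$ be the $\BF_p$-subalgebra generated by the matrix entries of $\gamma_1^{\pm 1},\dots,\gamma_k^{\pm 1}$. Then $R$ is a finitely generated commutative $\BF_p$-algebra, every product of the $\gamma_i^{\pm 1}$ has all entries in $R$, and consequently $\Gamma\subseteq GL(n,R)$; in particular each $g_i$ lies in $GL(n,R)$.

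For each ordered pair $i\neq j$, since $g_i-g_j$ is a nonzero matrix over $R$, some entry $r_{ij}\in R$ is nonzero; form the product $r=\prod_{i\neq j}r_{ij}\in R\setminus\{0\}$. It then suffices to produce a surjection $\pi:R\twoheadrightarrow\BK$ onto a finite field $\BK$ of characteristic $p$ with $\pi(r)\neq 0$: entrywise application of $\pi$ is a ring homomorphism $M_n(R)\to M_n(\BK)$ that sends inverses to inverses, and so restricts to a group homomorphism $\phi:GL(n,R)\to GL(n,\BK)$; the nonvanishing of each $r_{ij}$ modulo $\ker\pi$ then guarantees $\phi(g_i)\neq\phi(g_j)$ for all $i\neq j$.

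The existence of such a $\pi$ is the content of the Nullstellensatz in the following form, which I would invoke as a black box: for a finitely generated commutative algebra $A$ over a field $k$, every maximal ideal $\mathfrak m\subset A$ has $A/\mathfrak m$ finite over $k$, and the Jacobson radical of $A$ coincides with its nilradical. Applied to $A=R$ and $k=\BF_p$, the second assertion yields a maximal ideal $\mathfrak m$ of $R$ with $r\notin\mathfrak m$, and the first assertion gives that $\BK\mathrel{:=}R/\mathfrak m$ is a finite extension of $\BF_p$, hence a finite field of characteristic $p$. The main obstacle is really just this commutative-algebra input; everything else is formal bookkeeping, and if one unpacks the Nullstellensatz, the heart of the argument is Noether normalisation together with Zariski's lemma, which ensures that the residue field at any maximal ideal of a finitely generated $\BF_p$-algebra is algebraic over $\BF_p$ and hence a finite field.
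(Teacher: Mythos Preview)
Your argument is correct and is essentially the standard proof of Malcev's theorem. The paper does not actually supply a proof of this proposition; it is quoted as a classical result with a reference to Wehrfritz's book, so there is nothing to compare against beyond noting that your Nullstellensatz approach is precisely the usual one found in such references. One tiny point worth making explicit: the product $r=\prod_{i\neq j}r_{ij}$ is nonzero because $R$, being a subring of the field $\BF$, is an integral domain; you use this implicitly when invoking that $r$ lies outside the nilradical.
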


In the final part of the proof of the Polynomial Inverse theorem
we have to further upgrade \fref{thm:Hrushowsky-type}.
Due to some lucky coincidences this can be done by combining the finite case
with Malcev's theorem.

\bibliographystyle{amsplain}
\bibliography{Bibliography}

\end{document}